\newtheorem{theorem}{Theorem}[section]
\newtheorem{proposition}[theorem]{Proposition}
\newtheorem{corollary}[theorem]{Corollary}
\newtheorem{remark}[theorem]{Remark}
\newtheorem{example}[theorem]{Example}
\theoremstyle{definition}
\newtheorem{definition}{Definition}[section]
\begin{document}
\title[Spectral Theory of Almost Periodic Banach--Malcev  Algebras]{Spectral Theory of Almost Periodic Banach--Malcev Algebras and Applications to Moufang Dynamics}
\author{Marwa Ennaceur}
\address{Department of Mathematics, College of Science, University of Ha'il, Hail 81451, Saudi Arabia}
\email{\tt mar.ennaceur@uoh.edu.sa}
\keywords{Almost periodic functions, Banach algebras, 
spectral theory, operator semigroups, Malcev algebras, octonionic geometry}

\subjclass[2020]{46H70, 47D03, 43A60, 17D10, 53C30}

\begin{abstract}
We introduce almost periodic Banach--Malcev algebras as a non-associative extension of Bohr's classical theory. Our framework is based on the relative compactness of adjoint orbits $\{e^{t\,\mathrm{ad}(x)}(y)\}$, which yields the spectral characterization $\sigma(\mathrm{ad}(x)) \subseteq i\mathbb{R}$, uniform boundedness of orbit closures in the strong operator topology, and a continuous functional calculus for almost periodic derivations. Compact Malcev algebras---most notably the imaginary octonions $\mathrm{Im}(\mathbb{O})$---provide canonical finite-dimensional examples, and their associated Moufang loops carry strictly periodic flows. We also analyze structural actions on eigenspaces of the Malcev Laplacian as a concrete case study, where the bounded defect operator $S(x,y) \in \mathcal{B}(M)$ quantifies the non-associative correction. While speculative links to non-associative gauge theory are noted, they lie beyond the established mathematical scope. The recent convergence control of the BCH series for special Banach--Malcev algebras \cite{Athmouni2025} provides analytic justification for the local Moufang structure used throughout.
\end{abstract}
\maketitle
\tableofcontents
\section{Introduction}
\label{sec:intro}

The theory of Malcev algebras, introduced by A.\ I.\ Malcev in the 1950s~\cite{Malcev1955}, provides a natural non-associative generalization of Lie algebras. A Malcev algebra $\mathfrak{M}$ is an anti-commutative algebra over $\mathbb{R}$ or $\mathbb{C}$ satisfying the \emph{Malcev identity}
\[
J(x, y, [x, z]) = [J(x, y, z), x], \qquad x,y,z \in \mathfrak{M},
\]
where $J(x,y,z) = [[x,y],z] + [[y,z],x] + [[z,x],y]$ denotes the Jacobian. When $J\equiv0$, the identity reduces to the Jacobi identity, and $\mathfrak{M}$ becomes a Lie algebra. Malcev algebras arise canonically as the tangent algebras of analytic Moufang loops and appear in several geometric and physical contexts, notably in models involving exceptional symmetries and octonionic structures~\cite{Baez2002, Blumenhagen:2010hj}.

Almost periodicity, originally formulated by H.\ Bohr for functions on $\mathbb{R}$ and later abstracted by von Neumann and Bochner~\cite{Bochner1933, vonNeumann1934} to topological groups and vector spaces, captures the idea of \emph{recurrent} or \emph{quasi-periodic} behavior through the relative compactness of orbits under translation. In the Lie-theoretic setting, a Banach--Lie algebra $\mathfrak{g}$ is called \emph{almost periodic} if, for every $x,y\in\mathfrak{g}$, the adjoint orbit 
\[
\{e^{t\,\mathrm{ad}(x)}(y) : t \in \mathbb{R}\}
\]
is relatively compact in $\mathfrak{g}$. This condition is equivalent to the spectrum of each $\mathrm{ad}(x)$ being purely imaginary and leads to a rich interplay between harmonic analysis, representation theory, and geometry.

In this paper, we extend this framework to the non-associative realm by introducing and studying \textbf{almost periodic Banach--Malcev algebras}. We retain the same orbit-compactness criterion, now interpreted via the bounded derivations $\mathrm{ad}(x)$ associated with the Malcev bracket. Specifically, we assume throughout that $\mathfrak{M}$ is a Banach space, the bracket $[\cdot,\cdot]\colon \mathfrak{M}\times \mathfrak{M}\to \mathfrak{M}$ is jointly continuous, and for every $x\in \mathfrak{M}$, the adjoint operator $\mathrm{ad}(x)\colon y\mapsto [x,y]$ is bounded. Under these hypotheses, the exponential series $e^{t\,\mathrm{ad}(x)} = \sum_{n=0}^\infty \frac{t^n}{n!}\,\mathrm{ad}(x)^n$ converges in operator norm, defining a norm-continuous one-parameter group of inner automorphisms of $\mathfrak{M}$.

We introduce a rigorous, globally defined notion of almost periodicity for Banach–Malcev algebras (Definition~\ref{def:ap-algebra}) and establish a spectral characterization: for every $x\in \mathfrak{M}$, the operator $\mathrm{ad}(x)$ has spectrum contained in $i\mathbb{R}$ (Proposition~\ref{prop:spectral-ap}). Geometrically, the associated simply connected analytic Moufang loop carries quasi-periodic flows whose infinitesimal generators form an almost periodic Malcev algebra (Proposition~\ref{prop:flow-ap} and Corollary~\ref{cor:compact-loop-ap}). On the functional-analytic side, we analyze linear actions induced by the geometry of $S^7$, which exhibit a controlled deviation from the Lie homomorphism property—quantified by the intrinsic Malcev defect $S(x,y)$—and which we refer to as \emph{structural actions} (Section~\ref{sec:structural-actions}). Concrete applications are developed for spectral geometry on the 7-sphere $S^7$ and for octonionic dynamical systems, including explicit eigenvalue computations and a complete description of orbit structure (Sections~\ref{Mar1}–\ref{Mar2}). Finally, we verify geometrically how the octonionic structure realizes the algebraic defect $S(x,y)$ at the level of differential operators (Appendix~A).

While we also discuss speculative connections to non-associative gauge models, we carefully distinguish rigorously established results from conjectural directions (Section~\ref{sec:speculative}).

The paper is organized as follows. Section~\ref{sec:prelim} recalls the algebraic and analytic preliminaries on Malcev algebras, derivations, and Banach--Malcev structures. Section~\ref{sec:ap-algebras} introduces almost periodicity and develops its algebraic and spectral consequences. Section~\ref{sec:dynamics} provides the geometric interpretation in terms of recurrent dynamical systems on Malcev manifolds. Section~\ref{sec:structural-actions} presents a detailed case study of the regular action on eigenspaces of the Malcev Laplacian on $S^7$; this section does not aim to construct a general representation theory, but rather to illustrate how the algebraic defect $S(x,y)$ manifests in a concrete functional-analytic setting. Section~\ref{sec:applications} presents applications to octonionic dynamics and spectral analysis on $S^7$. Finally, Section~\ref{sec:speculative} outlines potential extensions to mathematical physics, while an appendix provides a concrete verification of the Malcev cocycle condition in the octonionic case.

\vspace{0.5em}
\noindent
\textbf{Remark on the state of the literature.}
The foundational literature on Malcev algebras and analytic Moufang loops was largely established between the 1950s and the early 2000s. Key structural results—such as the correspondence between Malcev algebras and simply connected Moufang loops (Sabinin–Mikheev), the existence of a bi-invariant measure (Nagy), and the construction of a universal enveloping algebra (Pérez-Izquierdo–Shestakov)—remain the most recent major advances in the algebraic theory. Until very recently, no quantitative analytic control over the Baker–Campbell–Hausdorff series in the Malcev setting was available. This gap has now been addressed by the concurrent work of Athmouni~\cite{Athmouni2025}, who establishes explicit convergence radii for the BCH series in special Banach–Malcev algebras, including the octonionic case. To the best of our knowledge, no further developments in the spectral or dynamical theory of almost periodic Malcev algebras have appeared since these works, justifying the temporal scope of our bibliography.

\section{Preliminaries}\label{sec:prelim}

This section recalls the basic definitions and structural properties of Malcev algebras, 
together with the analytic framework required in the sequel.  
Throughout, all vector spaces and algebras are assumed to be over $\mathbb{R}$ or $\mathbb{C}$.

\subsection{Malcev algebras and the Malcev identity}

\begin{definition}
A \emph{Malcev algebra} is a non-associative algebra $(\mathfrak{M}, [\cdot,\cdot])$ 
over a field $\mathbb{K}$ of characteristic zero such that:
\begin{enumerate}
    \item $[x,y] = -[y,x]$ for all $x,y \in \mathfrak{M}$ (anti-commutativity),
    \item the \emph{Malcev identity} holds:
    \[
    [[x,y],[x,z]] = [[[x,y],z],x] + [[[y,z],x],x] + [[[z,x],y],x],
    \qquad \forall\, x,y,z \in \mathfrak{M}.
    \]
    Equivalently, in terms of the \emph{Jacobian}
    \[
    J(x,y,z) = [[x,y],z] + [[y,z],x] + [[z,x],y],
    \]
    the identity reads $J(x,y,[x,z]) = [J(x,y,z),x]$.
\end{enumerate}
If $J(x,y,z)=0$ for all $x,y,z$, then $\mathfrak{M}$ is a Lie algebra.
\end{definition}

\begin{example}
\leavevmode
\begin{enumerate}
    \item[(1)] The algebra of purely imaginary octonions, equipped with the commutator bracket
    \[
    [x,y] = \tfrac{1}{2}(xy - yx),
    \]
    is a real simple Malcev algebra of dimension $7$. We denote it by $\mathfrak{M}(\mathbb{O}) = \operatorname{Im}(\mathbb{O})$.

    \item[(2)] Every Lie algebra is trivially a Malcev algebra, since the Jacobian vanishes identically.
\end{enumerate}
\end{example}

\begin{remark}
Besides the octonionic algebra $\mathfrak{M}(\mathbb{O})$, other finite-dimensional Malcev algebras include:
\begin{enumerate}
    \item the $3$-dimensional simple Malcev algebra $M_3$, which is isomorphic to $\mathfrak{sl}(2,\mathbb{R})$ as a vector space but equipped with the deformed bracket $[x,y]_{\!M_3} = \tfrac{3}{2}[x,y]_{\!\mathfrak{sl}(2)}$ (see \cite{Sagle1961});
    \item certain $5$-dimensional solvable Malcev algebras constructed as semidirect products of $\mathfrak{so}(2)$ with abelian ideals.
\end{enumerate}
In all such finite-dimensional examples where $\operatorname{ad}(x)$ has purely imaginary spectrum for every $x$, the almost periodicity criterion of Definition~\ref{def:ap-algebra} applies.
\end{remark}

\subsection{Derivations and automorphisms}\label{Mar}

\begin{definition}
A \emph{derivation} of a Malcev algebra $\mathfrak{M}$ is a linear map $D \colon \mathfrak{M} \to \mathfrak{M}$ satisfying the Leibniz rule
\[
D([x, y]) = [D(x), y] + [x, D(y)], \quad \forall x, y \in \mathfrak{M}.
\]
The set of all derivations is denoted $\operatorname{Der}(\mathfrak{M})$. When $\mathfrak{M}$ is a Banach--Malcev algebra (Definition~\ref{def:banach-malcev}), we further require $D$ to be bounded; thus $\operatorname{Der}(\mathfrak{M}) \subset B(\mathfrak{M})$.
\end{definition}

It is a standard fact that $\operatorname{Der}(\mathfrak{M})$ is closed under the commutator bracket $[D_1, D_2] = D_1 D_2 - D_2 D_1$, and therefore forms a Lie subalgebra of $B(\mathfrak{M})$.

For each $x \in \mathfrak{M}$, the adjoint operator $\operatorname{ad}(x) \colon y \mapsto [x, y]$ is a derivation. However, unlike the Lie case, the map
\[
\operatorname{ad} \colon \mathfrak{M} \longrightarrow \operatorname{Der}(\mathfrak{M}), \quad x \mapsto \operatorname{ad}(x)
\]
is not a Lie algebra homomorphism. The failure of this property is measured by a canonical bilinear correction term:

\begin{proposition}
For all $x, y \in \mathfrak{M}$, the commutator of adjoint operators satisfies
\[
[\operatorname{ad}(x), \operatorname{ad}(y)] = \operatorname{ad}([x, y]) + S(x, y),
\]
where $S(x, y) \in B(\mathfrak{M})$ is the bounded linear operator defined by
\[
S(x, y)(z) = J([x, y], z, x) - J(x, y, [z, x]), \quad \forall z \in \mathfrak{M}.
\]
The operator $S(x, y)$ vanishes identically if and only if the Jacobian $J$ is zero, i.e., when $\mathfrak{M}$ is a Lie algebra.
\end{proposition}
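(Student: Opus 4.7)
The plan is to expand both sides by definition, identify the resulting combination of iterated brackets as a Jacobian, and then use the Malcev identity to recast it in the specific form stated. First, by direct computation,
\[
[\operatorname{ad}(x),\operatorname{ad}(y)](z) - \operatorname{ad}([x,y])(z) = [x,[y,z]] - [y,[x,z]] - [[x,y],z].
\]
Using only anticommutativity, the right-hand side rearranges into a cyclic combination of iterated brackets that coincides, up to sign, with $J(x,y,z)$. This already shows that the deviation of $\operatorname{ad}$ from being a Lie homomorphism is controlled entirely by the Jacobian.

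The second step is to rewrite this cyclic expression in the specific form $J([x,y],z,x) - J(x,y,[z,x])$. Here I would invoke the Malcev identity $J(a,b,[a,c]) = [J(a,b,c),a]$ together with the total antisymmetry of $J$ in its arguments, which follows from the cyclic definition of $J$ and the anticommutativity of the bracket. Cyclic permutation allows $J([x,y],z,x)$ to be re-expressed with $x$ placed in the first slot, and the Malcev identity then converts both resulting Jacobians into expressions involving $[J(x,y,z),x]$; matching these against the outcome of Step 1 yields the stated equality.

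For boundedness, joint continuity of the bracket—with a constant $C$ satisfying $\|[u,v]\| \le C\|u\|\,\|v\|$—implies that each iterated bracket appearing in $J$ is a bounded trilinear map. Consequently, for fixed $x,y$, the map $z \mapsto J([x,y],z,x) - J(x,y,[z,x])$ is linear with operator norm bounded by a polynomial in $\|x\|$, $\|y\|$, and $C$, so $S(x,y) \in B(\mathfrak{M})$. The final assertion is then immediate from Step 1: if $J \equiv 0$ the algebra is Lie and trivially $S \equiv 0$, and conversely, vanishing of $S$ on all triples forces $J(x,y,z) = 0$ for every $x,y,z$.

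The principal obstacle is the algebraic manipulation in Step 2—carefully tracking the signs when applying the Malcev identity and the antisymmetry of $J$ to reduce both Jacobians $J([x,y],z,x)$ and $J(x,y,[z,x])$ into comparable canonical forms. Once this identity is in place, boundedness and the Lie-algebra characterization follow without difficulty.
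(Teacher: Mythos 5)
Your Step 1 is correct and is exactly how the paper's proof begins: expanding
$[\operatorname{ad}(x),\operatorname{ad}(y)](z)-\operatorname{ad}([x,y])(z)=[x,[y,z]]-[y,[x,z]]-[[x,y],z]$
and using only anticommutativity gives $-J(x,y,z)$ on the nose. The genuine gap is Step 2, which you correctly flag as ``the principal obstacle'' but then assume can be carried out; it cannot. The target expression $J([x,y],z,x)-J(x,y,[z,x])$ is homogeneous of degree $2$ in $x$, whereas $-J(x,y,z)$ is homogeneous of degree $1$ in $x$: replacing $x$ by $\lambda x$ scales one by $\lambda^{2}$ and the other by $\lambda$, so no manipulation using the (multilinear) Malcev identity and the alternating property of $J$ can convert one into the other unless both vanish. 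Concretely, cyclic permutation, antisymmetry of $J$, and the Malcev identity give $J([x,y],z,x)=[J(x,y,z),x]$ and $J(x,y,[z,x])=-[J(x,y,z),x]$, so the stated $S(x,y)(z)$ equals $2[J(x,y,z),x]$, which differs from $-J(x,y,z)$ whenever $J\not\equiv 0$ (e.g.\ on non-quaternionic basis triples in $\operatorname{Im}(\mathbb{O})$). The ``matching'' you propose at the end of Step 2 therefore fails.

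You should know that the paper's own proof has exactly the same defect: it writes $[x,[y,z]]=[[x,y],z]+[[z,x],y]+J(x,y,z)$ (the $J$ term should carry a minus sign) and then asserts that ``substituting and rearranging'' yields the stated form of $S(x,y)(z)$, which it does not; the proposition as printed is not correct for non-Lie Malcev algebras. The statement that is actually provable---and that your Step 1 already establishes---is $[\operatorname{ad}(x),\operatorname{ad}(y)]=\operatorname{ad}([x,y])-J(x,y,\cdot)$, i.e.\ $S(x,y)(z)=-J(x,y,z)$. With that formula your remaining points go through verbatim: joint continuity of the bracket makes $J$ a bounded trilinear map, so $S(x,y)\in B(\mathfrak{M})$ with $\|S(x,y)\|\le 3C^{2}\|x\|\,\|y\|$, and $S\equiv 0$ if and only if $J\equiv 0$, i.e.\ if and only if $\mathfrak{M}$ is a Lie algebra. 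Note that with the paper's formula this last equivalence would also need a separate argument, since $[J(x,y,z),x]=0$ for all $x,y,z$ does not obviously force $J=0$.
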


\begin{proof}[Proof of the identity for $S(x,y)$]
For any $z \in \mathfrak{M}$, compute:
\[
[\operatorname{ad}(x), \operatorname{ad}(y)](z) = [x, [y, z]] - [y, [x, z]].
\]
Using anti-commutativity and the definition of the Jacobian,
\[
[x, [y, z]] = [[x, y], z] + [[z, x], y] + J(x, y, z).
\]
Substituting and rearranging yields
\[
[\operatorname{ad}(x), \operatorname{ad}(y)](z) = [ [x, y], z ] + J([x,y], z, x) - J(x, y, [z, x]),
\]
which is precisely $\operatorname{ad}([x,y])(z) + S(x,y)(z)$.
\end{proof}
Thus, $S(x, y)$ quantifies precisely the deviation of $\operatorname{ad}$ from being a Lie algebra homomorphism. In what follows, $S$ will serve as the algebraic template for the correction terms appearing in structural actions (Section~\ref{sec:structural-actions}).

\begin{definition}
An \emph{automorphism} of $\mathfrak{M}$ is a bijective bounded linear map $\alpha \colon \mathfrak{M} \to \mathfrak{M}$ such that
\[
\alpha([x, y]) = [\alpha(x), \alpha(y)], \quad \forall x, y \in \mathfrak{M}.
\]
The set of all automorphisms, denoted $\operatorname{Aut}(\mathfrak{M})$, is a topological group under composition. When $\mathfrak{M}$ is a Banach--Malcev algebra, $\operatorname{Aut}(\mathfrak{M})$ is a closed subgroup of the invertible group of $B(\mathfrak{M})$, and its Lie algebra is precisely $\operatorname{Der}(\mathfrak{M})$.
\end{definition}

For any $x \in \mathfrak{M}$, the exponential series
\[
e^{t\,\operatorname{ad}(x)} = \sum_{n=0}^\infty \frac{t^n}{n!} \operatorname{ad}(x)^n
\]
converges in operator norm (by boundedness of $\operatorname{ad}(x)$) and defines a norm-continuous one-parameter subgroup $\{e^{t\,\operatorname{ad}(x)}\}_{t \in \mathbb{R}} \subset \operatorname{Aut}(\mathfrak{M})$. These are called the \emph{inner automorphisms} generated by $x$.

\subsection{Analytic and topological structures}

We now introduce a functional-analytic setting suitable for spectral and dynamical considerations.

\begin{definition}\label{def:banach-malcev}
A \emph{Banach--Malcev algebra} is a Malcev algebra $\mathfrak{M}$ that is also a Banach space, 
such that the bracket is jointly continuous:
\[
\|[x,y]\| \leq C\,\|x\|\,\|y\| \quad \text{for some } C > 0 \text{ and all } x,y \in \mathfrak{M}.
\]
Moreover, we assume that for every $x \in \mathfrak{M}$, the adjoint operator $\mathrm{ad}(x)$ is bounded, i.e.
\[
\|\mathrm{ad}(x)\|_{\mathrm{op}} < \infty.
\]
\end{definition}

Under these hypotheses, for any bounded derivation $D \in \mathrm{Der}(\mathfrak{M})$, the exponential series
\[
e^{tD} = \sum_{n=0}^{\infty} \frac{t^n D^n}{n!}
\]
converges in operator norm for all $t \in \mathbb{R}$ (see, e.g., \cite[Theorem~I.3.7]{EngelNagel2000}). 
It defines a norm-continuous one-parameter subgroup 
$\{\alpha_t\}_{t\in\mathbb{R}} \subset \mathrm{Aut}(\mathfrak{M})$.  
In particular, for each $x \in \mathfrak{M}$, the family
\[
\alpha_t^{(x)} := e^{t\,\mathrm{ad}(x)}
\]
is a strongly continuous group of \emph{inner automorphisms}, and the orbit map
\[
t \longmapsto \alpha_t^{(x)}(y)
\]
is continuous for every $y\in \mathfrak{M}$.

\begin{remark}\label{rem:ap-criterion}
The relative compactness of the orbits $\{e^{t\,\mathrm{ad}(x)}(y) : t \in \mathbb{R}\}$ in $\mathfrak{M}$, 
for all $x,y \in \mathfrak{M}$, will serve as the defining criterion for almost periodicity 
in Section~\ref{sec:ap-algebras}. This condition is automatically satisfied when $\mathfrak{M}$ 
is finite-dimensional and the spectrum of $\mathrm{ad}(x)$ is purely imaginary (cf.~Proposition~\ref{prop:spectral-ap}).
\end{remark}

\subsection{Quantifying the Deviation from the Lie Property}\label{sec:lie-deviation}

The failure of the adjoint map $\operatorname{ad} \colon \mathfrak{M} \to \operatorname{Der}(\mathfrak{M})$ to be a Lie algebra homomorphism is precisely measured by the bilinear operator $S(x,y) \in \operatorname{End}(\mathfrak{M})$ defined in Section~\ref{Mar}:
\[
[\operatorname{ad}(x), \operatorname{ad}(y)] = \operatorname{ad}([x,y]) + S(x,y),
\quad \text{where} \quad
S(x,y)(z) = J([x,y], z, x) - J(x, y, [z, x]).
\]
This identity holds for all $x, y, z \in \mathfrak{M}$ and vanishes identically if and only if the Jacobian $J$ is zero, i.e., when $\mathfrak{M}$ is a Lie algebra.

We can use $S$ to define a quantitative invariant of non-associativity:

\begin{definition}\label{Dm}
For a Banach–Malcev algebra $\mathfrak{M}$, define the \emph{Malcev defect norm} by
\[
\|S\| := \sup_{\substack{\|x\|=\|y\|=\|z\|=1}} \|S(x,y)(z)\|.
\]
Equivalently, $\|S\|$ is the operator norm of the trilinear map $(x,y,z) \mapsto S(x,y)(z)$.
\end{definition}

This norm satisfies $\|S\| = 0$ if and only if $\mathfrak{M}$ is a Lie algebra. Moreover, in the finite-dimensional case, $\|S\|$ is finite and equivalent to any norm on the space of trilinear maps, making it a natural measure of how far $\mathfrak{M}$ lies from the Lie category.

In the archetypal example $\mathfrak{M} = \mathfrak{M}(\mathbb{O}) = \operatorname{Im}(\mathbb{O})$, a direct computation using the octonionic multiplication table (e.g. via the Fano plane) shows that for any orthonormal basis $\{e_i\}_{i=1}^7$,
\[
\|S(e_i, e_j)\|_{\mathrm{op}} = 2 \quad \text{for all } i \ne j.
\]
Since the octonionic algebra is isotropic under the transitive action of $\mathrm{Spin}(7)$, this value is independent of the choice of orthonormal pair. Consequently, the supremum in Definition~\ref{Dm} is attained, and
\[
\|S\| = 2.
\]
This reflects the maximal non-associativity compatible with the Malcev identity.

Crucially, the operator $S(x,y)$ governs the second-order deviation of the inner automorphism group $\{e^{t\,\mathrm{ad}(x)}\}$ from being a Lie group representation. Indeed, the Baker–Campbell–Hausdorff (BCH) expansion for the product $e^{s\,\mathrm{ad}(x)} e^{t\,\mathrm{ad}(y)}$ contains correction terms involving nested brackets that, in the Malcev case, reduce to expressions containing $S(x,y)$. Until recently, no quantitative analytic control over this expansion was available in the non-associative setting.

This gap has been filled by the recent work of Athmouni~\cite{Athmouni2025}, who establishes explicit convergence radii for the BCH series in \emph{special} Banach–Malcev algebras—those embeddable into a Banach alternative algebra (which includes the imaginary octonions and their split real form). Under the continuity estimate $\|[x,y]\| \leq B\|x\|\|y\|$, the BCH series converges absolutely whenever
\[
B(\|x\| + \|y\|) < \frac{1}{4K},
\]
where $K \geq 1$ bounds the absolute values of the classical BCH coefficients. In the octonionic case, one has $B = 2$, yielding a concrete analyticity radius $\rho = 1/(8K)$ for the local Moufang loop structure.

As shown in \cite{SabininMikheev1982} and \cite{PerezIzquierdo2005}, the closure of the group generated by $\{e^{t\,\mathrm{ad}(x)}\}_{x \in \mathfrak{M}}$ is compact if and only if both of the following hold:
\begin{enumerate}
    \item each $\operatorname{ad}(x)$ has spectrum contained in $i\mathbb{R}$ (i.e., $\mathfrak{M}$ is almost periodic in the sense of Definition~\ref{def:ap-algebra});
    \item the defect $S$ is uniformly bounded, which is guaranteed by the Banach--Malcev hypothesis.
\end{enumerate}
Thus, the term $S(x,y)$ is not a peripheral artifact but the structural cornerstone that distinguishes Malcev dynamics from Lie dynamics. In what follows, we keep $S$ in the background of all constructions: the almost periodicity of orbits (Definition~\ref{def:ap-algebra}) ensures that the non-Lie corrections remain recurrent and do not disrupt compactness, while the boundedness of $S$ guarantees that the deviation from Lie behavior is tame in the analytic sense..
\section{Almost Periodic Malcev Algebras}\label{sec:ap-algebras}

In this section, we introduce a notion of almost periodicity for Malcev algebras that extends the classical theory of almost periodic Lie algebras to the non-associative setting. The definition is modeled on Bohr’s original idea: orbits under a natural group of symmetries should be relatively compact.

\subsection{Definition and basic properties}

We work throughout with a Banach--Malcev algebra $\mathfrak{M}$, i.e.\ a Banach space equipped with a jointly continuous bracket $[\cdot,\cdot]$ such that $\mathrm{ad}(x)$ is a bounded linear operator for every $x \in \mathfrak{M}$.

\begin{definition}\label{def:ap-algebra}
A Banach--Malcev algebra $\mathfrak{M}$ is called \emph{almost periodic} if for every $x \in \mathfrak{M}$, the orbit
\[
\mathcal{O}_x(y) = \{ e^{t\,\mathrm{ad}(x)}(y) : t \in \mathbb{R} \}
\]
is relatively compact in $\mathfrak{M}$ for all $y \in \mathfrak{M}$.
Equivalently, the one-parameter group $t \mapsto e^{t\,\mathrm{ad}(x)}$ acts on $\mathfrak{M}$ with relatively compact orbits.
\end{definition}

This definition aligns with the standard notion in the Lie case (where $\mathrm{ad}$ is a Lie algebra homomorphism) and avoids the ambiguity of requiring a pre-existing global automorphism group.

\begin{remark}
In the special case where the Jacobian $J(x,y,z)$ vanishes identically, $\mathfrak{M}$ is a Lie algebra, and Definition~\ref{def:ap-algebra} reduces to the classical notion of an almost periodic Lie algebra (cf.~Bochner~\cite{Bochner1933}, von Neumann~\cite{vonNeumann1934}). Thus, our framework genuinely extends the associative theory.

A key structural difference in the Malcev setting is the identity
\[
[\operatorname{ad}(x), \operatorname{ad}(y)] = \operatorname{ad}([x,y]) + S(x,y),
\]
where $S(x,y) \in B(\mathfrak{M})$ is the bounded correction term defined in Section~\ref{Mar}. In the Banach setting, the joint continuity of the bracket ensures that $S$ is a bounded trilinear map, so the deviation from the Lie homomorphism property remains analytically tame. This boundedness guarantees that the one-parameter groups $t \mapsto e^{t\,\operatorname{ad}(x)}$ are well-defined in $\operatorname{Aut}(\mathfrak{M})$ and that orbit compactness is a meaningful notion.
\end{remark}

It is worth emphasizing a key structural difference between the Lie and Malcev settings: in the Lie case, the map $\mathrm{ad}: \mathfrak{g} \to \mathrm{Der}(\mathfrak{g})$ is a Lie algebra homomorphism, so the orbit $t \mapsto e^{t\,\mathrm{ad}(x)}$ defines a one-parameter subgroup of $\mathrm{Aut}(\mathfrak{g})$ whose infinitesimal generator is itself a derivation. In the Malcev case, however, the identity
\[
[\mathrm{ad}(x),\mathrm{ad}(y)] = \mathrm{ad}([x,y]) + S(x,y)
\]
includes a nontrivial correction term $S(x,y)$ (expressible via the Jacobian), which breaks the homomorphism property. Despite this, the boundedness of $\mathrm{ad}(x)$ still ensures that $e^{t\,\mathrm{ad}(x)}$ is a well-defined inner automorphism, and the orbit-compactness criterion remains meaningful. The theory developed here thus generalizes the Lie-algebraic notion while accommodating this intrinsic non-associative deformation.

\begin{example}
\leavevmode
\begin{enumerate}
    \item[(1)] The Malcev algebra $\mathfrak{M}(\mathbb{O})$ of purely imaginary octonions is finite-dimensional and simple. For any $x \in \mathfrak{M}(\mathbb{O})$, the operator $\mathrm{ad}(x)$ is skew-symmetric with respect to the standard inner product, so its spectrum is purely imaginary and discrete. Consequently, $e^{t\,\mathrm{ad}(x)}$ is a periodic (hence almost periodic) orthogonal transformation. Thus $\mathfrak{M}(\mathbb{O})$ is almost periodic.

    \item[(2)] Any compact Lie algebra (e.g.\ $\mathfrak{su}(2)$ or $\mathfrak{so}(3)$), regarded as a Malcev algebra, is almost periodic, since $\mathrm{ad}(x)$ generates a compact subgroup of $\mathrm{Aut}(\mathfrak{g})$.

    \item[(3)] Let $\mathfrak{M}$ be a Banach--Malcev algebra such that for every $x \in \mathfrak{M}$, the operator $\mathrm{ad}(x)$ has purely imaginary spectrum and generates a norm-continuous group $\{e^{t\,\mathrm{ad}(x)}\}_{t\in\mathbb{R}}$ with relatively compact orbits. Then $\mathfrak{M}$ is almost periodic in the sense of Definition~\ref{def:ap-algebra}.
\end{enumerate}
\end{example}

\begin{proposition}\label{prop:subspace}
Let $\mathfrak{M}$ be an almost periodic Banach--Malcev algebra. Then:
\begin{enumerate}
    \item $\mathfrak{M}$ itself is a Malcev subalgebra with the property that every element generates relatively compact adjoint orbits; in particular, the set of such elements coincides with $\mathfrak{M}$ and is closed under the Malcev bracket.
    \item For each fixed $x \in \mathfrak{M}$ and $y \in \mathfrak{M}$, the closure $\overline{\mathcal{O}_x(y)}$ is a compact, $e^{t\,\mathrm{ad}(x)}$-invariant subset of $\mathfrak{M}$.
    \item If $\pi: \mathfrak{M} \to \mathfrak{B}(X)$ is a continuous linear map into the bounded operators on a Banach space $X$, then for every $x,y \in \mathfrak{M}$, the orbit $\{ \pi(e^{t\,\mathrm{ad}(x)}(y)) : t \in \mathbb{R} \}$ is relatively compact in $\mathfrak{B}(X)$.
\end{enumerate}
\end{proposition}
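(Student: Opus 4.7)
The three claims are largely formal consequences of Definition~\ref{def:ap-algebra}, so the proof proposal is short and the main work is book-keeping; I will treat the three items in order, flagging where topological subtleties enter.

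For part (1), the plan is simply to unpack the definition. Almost periodicity asserts that every $x \in \mathfrak{M}$ generates relatively compact orbits $\mathcal{O}_x(y)$ for all $y$, so the set $\{x \in \mathfrak{M} : \mathcal{O}_x(y) \text{ is relatively compact for every } y\}$ is, tautologically, all of $\mathfrak{M}$. Closure under the bracket is then automatic because $\mathfrak{M}$ is a Malcev algebra by hypothesis and the ``subalgebra'' in question is $\mathfrak{M}$ itself. The only thing worth stressing is that this step makes no use of $S(x,y)$: it is a statement about orbits of individual generators, not about how the generators interact.

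For part (2), I would first invoke the general fact that in a Banach space a subset is relatively compact if and only if its closure is compact; thus $\overline{\mathcal{O}_x(y)}$ is compact by Definition~\ref{def:ap-algebra}. For $e^{t\,\mathrm{ad}(x)}$-invariance, the plan is to exploit the group law: for any $s,t\in \mathbb{R}$, $e^{s\,\mathrm{ad}(x)}\bigl(e^{t\,\mathrm{ad}(x)}(y)\bigr) = e^{(s+t)\,\mathrm{ad}(x)}(y) \in \mathcal{O}_x(y)$, so $e^{s\,\mathrm{ad}(x)}$ preserves $\mathcal{O}_x(y)$. Since $e^{s\,\mathrm{ad}(x)}$ is continuous (indeed bounded), it extends this invariance to the closure $\overline{\mathcal{O}_x(y)}$. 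No non-associative correction appears because the relevant group law is just the evolution of a single one-parameter subgroup, which is genuinely associative even in the Malcev setting.

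For part (3), the plan is to apply continuity of $\pi$: a continuous map between topological spaces sends relatively compact sets to relatively compact sets, and $\pi(\mathcal{O}_x(y)) = \{\pi(e^{t\,\mathrm{ad}(x)}(y)) : t \in \mathbb{R}\}$, so relative compactness of $\mathcal{O}_x(y)$ in $\mathfrak{M}$ transfers to relative compactness of its image in $\mathfrak{B}(X)$, in the norm topology inherited from $\pi$'s codomain.

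There is no genuine technical obstacle here; the closest thing to a subtlety is to keep track of which topology is meant in (3) — the continuous image will be relatively compact in whichever topology $\pi$ is continuous with respect to, so if only strong continuity of $\pi$ is assumed, then only strong relative compactness can be concluded. I would therefore state the result for the norm topology on $\mathfrak{B}(X)$ under a norm-continuous hypothesis on $\pi$, and note in passing that the argument carries through verbatim for weaker topologies if $\pi$ is continuous for those.
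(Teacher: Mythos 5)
Your proposal is correct and follows essentially the same route as the paper's proof: part (1) is the same tautological unpacking of Definition~\ref{def:ap-algebra}, part (2) uses the same group law $e^{s\,\mathrm{ad}(x)}e^{t\,\mathrm{ad}(x)}=e^{(s+t)\,\mathrm{ad}(x)}$ together with continuity (the paper phrases this via the homeomorphism $e^{s\,\mathrm{ad}(x)}$ to get invariance as an equality, but this is the same argument), and part (3) is the same observation that continuous maps preserve relative compactness. Your closing remark about which topology on $\mathfrak{B}(X)$ is meant in (3) is a reasonable clarification but does not change the argument.
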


\begin{proof}
We assume $\mathfrak{M}$ is a Banach space with jointly continuous bracket and bounded adjoint operators.

\medskip\noindent\textbf{(1)} By Definition~\ref{def:ap-algebra}, \emph{every} $x \in \mathfrak{M}$ satisfies the orbit compactness condition. Therefore the set of such elements is $\mathfrak{M}$ itself, which is trivially a Malcev subalgebra. This formulation sidesteps potential technical difficulties that would arise from a local or elementwise definition of almost periodicity.

\medskip\noindent\textbf{(2)} Since $\mathfrak{M}$ is complete and $\mathcal{O}_x(y)$ is relatively compact, its closure $\overline{\mathcal{O}_x(y)}$ is compact. For any $s \in \mathbb{R}$, the map $z \mapsto e^{s\,\mathrm{ad}(x)}(z)$ is a homeomorphism of $\mathfrak{M}$, so
\[
e^{s\,\mathrm{ad}(x)}\bigl(\overline{\mathcal{O}_x(y)}\bigr) = \overline{e^{s\,\mathrm{ad}(x)}(\mathcal{O}_x(y))} = \overline{\{ e^{(t+s)\,\mathrm{ad}(x)}(y) : t \in \mathbb{R} \}} = \overline{\mathcal{O}_x(y)}.
\]
Thus the closure is invariant under the flow.

\medskip\noindent\textbf{(3)} Continuity of $\pi$ implies that the image of a relatively compact set is relatively compact. Since $\{e^{t\,\mathrm{ad}(x)}(y)\}_{t\in\mathbb{R}}$ is relatively compact in $\mathfrak{M}$ and $\pi$ is continuous, the set $\{\pi(e^{t\,\mathrm{ad}(x)}(y))\}_{t\in\mathbb{R}}$ is relatively compact in $\mathfrak{B}(X)$.
\end{proof}

\begin{remark}
Almost periodic Malcev algebras serve as infinitesimal models for recurrent dynamics on Moufang loops and provide a natural setting for quasi-periodic symmetries in non-associative geometry and mathematical physics. While continuous linear actions $\pi$ as in part (3) are well-defined and inherit orbit compactness, we emphasize that such maps do not constitute representations in the Lie-theoretic sense, since they are not required to preserve the algebraic structure. A full theory of representations of Malcev algebras—including notions of homomorphism, irreducibility, or universal enveloping algebras—remains beyond the scope of this paper and is not needed for the results that follow.
\end{remark}

\subsection{Spectral properties and functional calculus}

We now establish the spectral consequences of almost periodicity.

\begin{definition}
Let $\mathfrak{M}$ be a Banach--Malcev algebra. For $x \in \mathfrak{M}$, the \emph{spectrum} of $x$ is defined as the spectrum of the bounded operator $\mathrm{ad}(x) \in \mathfrak{B}(\mathfrak{M})$:
\[
\sigma(x) := \sigma(\mathrm{ad}(x)) = \{ \lambda \in \mathbb{C} : \lambda I - \mathrm{ad}(x) \text{ is not invertible in } \mathfrak{B}(\mathfrak{M}) \}.
\]
\end{definition}

\begin{proposition}\label{prop:spectral-ap}
Let $\mathfrak{M}$ be an almost periodic Banach--Malcev algebra. Then for every $x \in \mathfrak{M}$:
\begin{enumerate}
    \item $\sigma(x) \subseteq i\mathbb{R}$.
    \item The group $t \mapsto e^{t\,\mathrm{ad}(x)}$ is almost periodic in the strong operator topology on $\mathfrak{B}(\mathfrak{M})$.
    \item The resolvent $R(\lambda, x) = (\lambda I - \mathrm{ad}(x))^{-1}$ is a $\mathfrak{B}(\mathfrak{M})$-valued almost periodic function of $\lambda$ on $\mathbb{C} \setminus i\mathbb{R}$.
\end{enumerate}
\end{proposition}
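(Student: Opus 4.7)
The plan is to derive all three claims from a single pointwise-to-uniform bootstrap: upgrade the orbit compactness hypothesis to a uniform operator bound on $\{e^{t\,\mathrm{ad}(x)}\}_{t\in\mathbb{R}}$ in $\mathfrak{B}(\mathfrak{M})$, and then harvest the spectral, dynamical, and resolvent consequences through standard $C_0$-group machinery. The Malcev structure enters only through Definition~\ref{def:ap-algebra}; after that the proof is purely operator-theoretic, since by hypothesis $\mathrm{ad}(x)$ is already a bounded generator of a norm-continuous group.

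For (1), I would first observe that every orbit $\mathcal{O}_x(y)$ is relatively compact and hence bounded in $\mathfrak{M}$, so the family $\{e^{t\,\mathrm{ad}(x)}\}_{t\in\mathbb{R}}$ is pointwise bounded. The Banach--Steinhaus theorem then yields $M_x := \sup_{t\in\mathbb{R}}\|e^{t\,\mathrm{ad}(x)}\|_{\mathrm{op}} < \infty$. Applying the spectral mapping theorem for norm-continuous groups gives $e^{t\sigma(\mathrm{ad}(x))} \subseteq \sigma(e^{t\,\mathrm{ad}(x)})$, so $|e^{t\lambda}| \leq M_x$ for every $\lambda \in \sigma(\mathrm{ad}(x))$ and every $t \in \mathbb{R}$, which forces $\mathrm{Re}(\lambda)=0$. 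Part (2) is then essentially a translation of Definition~\ref{def:ap-algebra} into the language of vector-valued Bohr almost periodic functions: for each fixed $y$, the map $t \mapsto e^{t\,\mathrm{ad}(x)}y$ is norm-continuous with relatively compact range, which is the Bochner--von Neumann characterization of almost periodicity of a continuous $\mathfrak{M}$-valued function on $\mathbb{R}$; since this holds for every $y$, the group is almost periodic in the strong operator topology, and the set of Bohr $\varepsilon$-periods is relatively dense uniformly in $y$ on each compact subset of $\mathfrak{M}$.

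For (3), I would use the Laplace representation
\[
R(\lambda,x) = \int_0^\infty e^{-\lambda t}\,e^{t\,\mathrm{ad}(x)}\,dt \qquad (\mathrm{Re}\,\lambda > 0),
\]
with the symmetric formula on the left half-plane, both valid in SOT thanks to the uniform bound $M_x$. Restricting to a vertical line $\lambda = \alpha + i\beta$ with $\alpha \neq 0$ fixed, I would read the right-hand side as the SOT Fourier transform of $t \mapsto \mathbf{1}_{[0,\infty)}(t)\,e^{-\alpha t}\,e^{t\,\mathrm{ad}(x)}$ and transfer almost periodicity from the group to the resolvent through the spectral decomposition furnished by the structure theorem for almost periodic SOT groups: writing $e^{t\,\mathrm{ad}(x)} = \sum_k e^{i\mu_k t} P_k$ with orthogonal-type spectral projections $P_k$ (the series converging in SOT on each $y$), one obtains the formal expansion $R(\lambda,x) = \sum_k (\lambda - i\mu_k)^{-1} P_k$, whose partial sums form a Cauchy net of $\mathfrak{B}(\mathfrak{M})$-valued almost periodic functions of $\beta$. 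The main obstacle, in my view, is precisely the interpretation of the phrase \emph{almost periodic function of $\lambda$}: the resolvent is holomorphic, not periodic, so the claim must be understood as almost periodicity of $\beta \mapsto R(\alpha+i\beta,x)$ on each vertical line, in SOT rather than operator norm. Making this identification rigorous --- checking that the Bohr--Fourier series of the resolvent converges uniformly on compact subsets of $\mathbb{C}\setminus i\mathbb{R}$ in the appropriate operator topology, and that the limit is genuinely Bohr almost periodic --- is the delicate step; everything else is a routine application of the theory of uniformly bounded $C_0$-groups.
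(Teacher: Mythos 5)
Your treatment of parts (1) and (2) is correct and follows essentially the paper's route: Banach--Steinhaus upgrades pointwise orbit boundedness to $M_x=\sup_t\|e^{t\,\mathrm{ad}(x)}\|<\infty$, and the spectral mapping theorem for the bounded generator $\mathrm{ad}(x)$ then forces $\mathrm{Re}\,\lambda=0$; your version of (1) is in fact tighter than the paper's, which asserts that $\sigma(e^{t\,\mathrm{ad}(x)})$ lies on the unit circle rather than merely in the annulus $\{M_x^{-1}\le|z|\le M_x\}$ that the two-sided bound actually gives (your ``bounded for all $t\in\mathbb{R}$, hence $\mathrm{Re}\,\lambda=0$'' is the clean argument). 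For (2) the paper embeds $\{e^{t\,\mathrm{ad}(x)}\}$ into a product of compact orbit closures over a dense sequence $(y_n)$, implicitly assuming separability; your per-orbit Bochner argument avoids that assumption, but note that relative compactness of the \emph{range} of $t\mapsto e^{t\,\mathrm{ad}(x)}y$ is not by itself the Bochner criterion --- you must also use the uniform bound $M_x$ to see that the translates $\{e^{(\cdot+s)\,\mathrm{ad}(x)}y\}_s$ form a relatively compact subset of $C_b(\mathbb{R},\mathfrak{M})$, via continuity of $z\mapsto e^{(\cdot)\,\mathrm{ad}(x)}z$ from the compact orbit closure into $C_b(\mathbb{R},\mathfrak{M})$. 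That is a one-line repair.

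The genuine gap is in (3), and you have correctly located it but not closed it. Your proposed expansion $R(\lambda,x)=\sum_k(\lambda-i\mu_k)^{-1}P_k$ cannot yield Bohr almost periodicity on a vertical line: each summand $\beta\mapsto(\alpha+i\beta-i\mu_k)^{-1}$ tends to $0$ as $|\beta|\to\infty$, and more fundamentally $\|R(\lambda,x)\|\le(|\lambda|-\|\mathrm{ad}(x)\|)^{-1}\to 0$ along every vertical line. Since a Bohr almost periodic function on $\mathbb{R}$ that vanishes at infinity is identically zero, the resolvent of a nonzero $\mathrm{ad}(x)$ is \emph{never} almost periodic in $\mathrm{Im}\,\lambda$ in the standard sense, so no amount of care with the convergence of your Bohr--Fourier series will produce the claimed conclusion. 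The paper's own proof does not escape this either: it invokes the fact that convolution of an almost periodic function with an $L^1$ kernel is almost periodic, but $\int_0^\infty e^{-\alpha t}e^{-i\beta t}e^{t\,\mathrm{ad}(x)}\,dt$ is a Fourier--Laplace transform in $\beta$, not a convolution of the group against an $L^1$ kernel, so the cited theorem does not apply. In short: your instinct that the interpretation of ``almost periodic function of $\lambda$'' is the delicate point is exactly right, but the statement as literally written fails under the Bohr interpretation, and your proposal (like the paper's proof) does not establish it; at best one can say that $R(\lambda,x)$ is the Laplace transform of an almost periodic operator-valued function, or that it is asymptotically almost periodic (decaying plus zero almost periodic part), which is a strictly weaker assertion.
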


\begin{proof}
(1) Since $t \mapsto e^{t\,\mathrm{ad}(x)}$ is a strongly continuous group with relatively compact orbits, it follows from Definition~\ref{def:ap-algebra} that for each $y \in \mathfrak{M}$, the set $\{e^{t\,\mathrm{ad}(x)} y : t \in \mathbb{R}\}$ is relatively compact. In particular, the orbit is bounded, and thus $\sup_{t \in \mathbb{R}} \|e^{t\,\mathrm{ad}(x)}\| < \infty$. By the spectral mapping theorem for $C_0$-groups (see, e.g., \cite[Theorem~IV.3.5]{EngelNagel2000}), we have $\sigma(e^{t\,\mathrm{ad}(x)}) = e^{t\,\sigma(\mathrm{ad}(x))}$. Since the left-hand side lies in the closure of a relatively compact subset of $\mathrm{GL}(\mathfrak{M})$, it is contained in the unit circle. Hence $e^{t\,\lambda} \in \mathbb{T}$ for all $t \in \mathbb{R}$ and all $\lambda \in \sigma(\mathrm{ad}(x))$, which implies $\lambda \in i\mathbb{R}$.

(2) For each $y \in \mathfrak{M}$, the orbit $\{e^{t\,\mathrm{ad}(x)} y\}_{t\in\mathbb{R}}$ is relatively compact by Definition~\ref{def:ap-algebra}. Let $(y_n)_{n=1}^\infty$ be a dense sequence in $\mathfrak{M}$. Define the map
\[
\Phi \colon \mathfrak{B}(\mathfrak{M}) \to \prod_{n=1}^\infty \overline{\{e^{t\,\mathrm{ad}(x)} y_n : t \in \mathbb{R}\}}, \quad T \mapsto (T y_n)_n.
\]
Each factor is compact, so the product is compact in the product topology. The restriction of $\Phi$ to the set $\{e^{t\,\mathrm{ad}(x)} : t \in \mathbb{R}\}$ is injective and continuous, hence its image is relatively compact. Since convergence in the product topology is equivalent to strong operator convergence on the dense set $(y_n)$, it follows that $\{e^{t\,\mathrm{ad}(x)}\}_{t\in\mathbb{R}}$ is relatively compact in the strong operator topology.

(3) For $\Re \lambda > 0$, the resolvent admits the Laplace representation
\[
R(\lambda, x) = \int_0^\infty e^{-\lambda t} e^{t\,\mathrm{ad}(x)} \, dt,
\]
which converges in the operator norm due to the uniform boundedness of $e^{t\,\mathrm{ad}(x)}$. The function $t \mapsto e^{t\,\mathrm{ad}(x)}$ is almost periodic in the strong operator topology by part (2). Since $e^{-\lambda t} \in L^1(0,\infty)$, the Bochner integral of an almost periodic function against an absolutely integrable kernel is again almost periodic (see, e.g., \cite[Theorem~2.8]{AmerioProuse1971}). Therefore, $\lambda \mapsto R(\lambda, x)$ is almost periodic on vertical lines $\Re \lambda = c > 0$. An analogous argument applies for $\Re \lambda < 0$ using the representation $R(\lambda, x) = -\int_{-\infty}^0 e^{-\lambda t} e^{t\,\mathrm{ad}(x)} \, dt$.
\end{proof}

\begin{remark}\label{rem:functional-calculus}
The inclusion $\sigma(x) \subseteq i\mathbb{R}$ enables a continuous functional calculus: for any $f \in C_b(i\mathbb{R})$, one may define
\[
f(\mathrm{ad}(x)) = \frac{1}{2\pi i} \int_\Gamma f(\lambda) (\lambda I - \mathrm{ad}(x))^{-1} \, d\lambda,
\]
where $\Gamma$ is a contour in $\mathbb{C} \setminus i\mathbb{R}$ that winds once around the spectrum. This provides a foundation for harmonic analysis on Malcev manifolds, as developed in Section~\ref{sec:dynamics}.
\end{remark}

\begin{example}\label{ex:s7-ap}
For $\mathfrak{M} = \mathfrak{M}(\mathbb{O}) \cong T_1 S^7$, the operators $\mathrm{ad}(x)$ are skew-symmetric with respect to the standard inner product on $\mathbb{R}^7$. Hence $\sigma(x) \subset i\mathbb{R}$ is finite and purely imaginary, and $e^{t\,\mathrm{ad}(x)}$ is a periodic orthogonal transformation. Therefore, $\mathfrak{M}(\mathbb{O})$ is an almost periodic Malcev algebra.
\end{example}

\section{Dynamical Systems and Almost Periodic Flows on Malcev Manifolds}\label{sec:dynamics}

In this section, we interpret almost periodic Malcev algebras geometrically through dynamical systems on smooth Moufang loops. These manifolds generalize Lie groups in the non-associative setting and provide a natural arena for quasi-periodic flows.

\subsection{Malcev manifolds and infinitesimal actions}

Let $(\mathcal{M},\circ)$ be a smooth, connected, simply connected analytic Moufang loop with identity element $e$. Its tangent space $\mathfrak{M} = T_e \mathcal{M}$ carries a natural Malcev algebra structure given by the commutator of left-invariant vector fields. By a theorem of Sabinin and Mikheev~\cite{SabininMikheev1982}, the exponential map
\[
\exp : \mathfrak{M} \supset U \longrightarrow \mathcal{M}
\]
is a local analytic diffeomorphism from a neighborhood $U$ of $0$ onto a neighborhood of $e$, and the loop multiplication can be recovered from the Malcev bracket via a convergent BCH-type series.

For each $x \in \mathfrak{M}$, define the \emph{left-invariant vector field} $X_x$ on $\mathcal{M}$ by
\[
X_x(f)(p) = \left.\frac{d}{dt}\right|_{t=0} f\bigl(\exp(t x) \circ p\bigr), \qquad f \in C^\infty(\mathcal{M}),\ p \in \mathcal{M}.
\]
Let $\Phi_t$ denote the (maximal) flow generated by $X_x$, i.e.\ the solution to $\frac{d}{dt}\Phi_t(p) = X_x(\Phi_t(p))$, $\Phi_0(p) = p$.

\begin{definition}
The vector field $X_x$ is said to generate an \emph{almost periodic flow} if the orbit $\{\Phi_t(p) : t \in \mathbb{R}\}$ is relatively compact in $\mathcal{M}$ for every $p \in \mathcal{M}$.
\end{definition}

\begin{proposition}\label{prop:flow-ap}
Let $\mathfrak{M}$ be a finite-dimensional Malcev algebra such that for every $x \in \mathfrak{M}$, the operator $\operatorname{ad}(x)$ has purely imaginary spectrum. Let $(\mathcal{M}, \circ)$ be the unique simply connected analytic Moufang loop with tangent algebra $\mathfrak{M} = T_e\mathcal{M}$ (existence guaranteed by \cite[Theorem 1]{SabininMikheev1982}). Then:
\begin{enumerate}
    \item[(i)] $\mathcal{M}$ is a compact real-analytic manifold.
    \item[(ii)] For every $x \in \mathfrak{M}$, the left-invariant vector field $X_x$ is complete.
    \item[(iii)] The flow $\Phi_t$ generated by $X_x$ is almost periodic.
\end{enumerate}
\end{proposition}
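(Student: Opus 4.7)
The plan is to establish (i) first, as (ii) and (iii) will then follow easily. The main task is to show that the global spectral condition forces geometric compactness of the Moufang loop $\mathcal{M}$; once that is in place, (ii) is a standard consequence of ODE theory on compact manifolds, and (iii) is immediate from the definition of an almost periodic flow.

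For (i), I would first promote the pointwise spectral hypothesis into a global metric structure on $\mathfrak{M}$. The assumption that each $\operatorname{ad}(x)$ has purely imaginary spectrum, combined with finite dimensionality, forces each $\operatorname{ad}(x)$ to be semisimple with bounded exponential orbit $\{e^{t\,\operatorname{ad}(x)}\}_{t \in \mathbb{R}} \subset \operatorname{GL}(\mathfrak{M})$. Using the Malcev analog of the Killing form $K(x,y) = \operatorname{tr}(\operatorname{ad}(x)\operatorname{ad}(y))$, whose invariance in the Malcev case is governed by the defect identity $[\operatorname{ad}(x),\operatorname{ad}(y)] = \operatorname{ad}([x,y]) + S(x,y)$ of Section~\ref{Mar}, I would produce an $\operatorname{ad}$-invariant positive-definite inner product $\langle\cdot,\cdot\rangle$ on $\mathfrak{M}$. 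With respect to $\langle\cdot,\cdot\rangle$ each $\operatorname{ad}(x)$ is skew-symmetric, so the closure in $\operatorname{GL}(\mathfrak{M})$ of the group generated by all $e^{t\,\operatorname{ad}(x)}$ lies in $\operatorname{O}(\mathfrak{M}, \langle\cdot,\cdot\rangle)$ and is therefore compact.

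Next I would transfer $\langle\cdot,\cdot\rangle$ to $\mathcal{M}$ by left translation to obtain a left-invariant analytic Riemannian metric on $\mathcal{M}$. The Moufang-loop analog of Koszul's bi-invariance formula, worked out by Nagy for analytic Moufang loops, shows that this metric is actually bi-invariant. Bi-invariance implies geodesic completeness, and the geodesics through the identity $e$ coincide with the one-parameter subgroups $t \mapsto \exp(tx)$. Skew-symmetry of $\operatorname{ad}(x)$ forces each such geodesic to have bounded image, so together with the surjectivity of $\exp$ (which follows from the Sabinin--Mikheev correspondence in this setting) the metric diameter of $\mathcal{M}$ is bounded, and Hopf--Rinow yields compactness of $\mathcal{M}$. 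Then (ii) follows because every smooth vector field on a compact manifold is complete, and (iii) follows because every orbit of $\Phi_t$ lies in the compact set $\mathcal{M}$ and is thus trivially relatively compact.

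The main obstacle is constructing the invariant inner product: in the Lie case this is classical via the Killing form on a compact semisimple algebra, but the Malcev defect $S(x,y)$ breaks the straightforward invariance of the trace form, and the argument must be supplemented with the Malcev-specific structural results of Kerdman (semisimplicity and decomposition of compact Malcev algebras) together with the uniform boundedness of $S$ guaranteed by the Banach--Malcev hypothesis. A secondary but nontrivial step is invoking the Sabinin--Mikheev/Nagy machinery to transfer compactness from the algebraic level ($\mathfrak{M}$) to the geometric level ($\mathcal{M}$), which does not proceed through the usual coset-space argument of Lie theory since $\mathcal{M}$ is not a homogeneous space of its inner-automorphism group in the classical sense.
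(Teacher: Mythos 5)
Your route to (i) is genuinely different from the paper's. The paper proves (iii) first: it lets $K$ be the closure in $\operatorname{GL}(\mathfrak{M})$ of the group generated by all $e^{t\,\operatorname{ad}(x)}$, asserts $K$ is compact, uses the local conjugacy $\Phi_t(\exp y)=\exp(e^{t\,\operatorname{ad}(x)}y)$ to get relatively compact orbits, and then obtains (i) by letting $K$ act on $\mathcal{M}$ via $L_\alpha(\exp y)=\exp(\alpha(y))$, claiming this action is transitive and invoking the homogeneous-space theorem $\mathcal{M}\cong K/K_e$. You instead aim for compactness of $\mathcal{M}$ through a bi-invariant Riemannian metric, a diameter bound, and Hopf--Rinow. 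Your observation that (iii) becomes immediate once (i) is known (every subset of a compact manifold is relatively compact, and the paper's definition of almost periodic flow asks for nothing more) is a genuine simplification over the paper's argument for (iii).

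There are, however, two concrete gaps. First, purely imaginary spectrum does \emph{not} force $\operatorname{ad}(x)$ to be semisimple in finite dimensions (a nilpotent operator has spectrum $\{0\}\subset i\mathbb{R}$ but unbounded exponential orbit), and the trace form $-\operatorname{tr}(\operatorname{ad}(x)\operatorname{ad}(y))$ is in general only positive \emph{semi}definite, degenerating on $\{x:\operatorname{ad}(x)=0\}$; so the invariant inner product you need does not exist without further hypotheses. You flag this as the main obstacle but do not close it. Second, and more fatally for your scheme, the step ``skew-symmetry of $\operatorname{ad}(x)$ forces the geodesic $t\mapsto\exp(tx)$ to have bounded image'' is false: that curve lives in $\mathcal{M}$, not in $\mathfrak{M}$, and its large-$t$ behaviour is not controlled by the linear operator $\operatorname{ad}(x)$. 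The abelian Malcev algebra $\mathbb{R}^n$ satisfies every hypothesis of the proposition ($\operatorname{ad}(x)=0$ is skew-symmetric with spectrum $\{0\}$), yet the associated simply connected loop is $(\mathbb{R}^n,+)$ and the one-parameter subgroups $t\mapsto tx$ are unbounded. This example shows the proposition is false as literally stated, so \emph{some} step must break in any argument: in yours it is the diameter bound; in the paper's it is the unjustified claim that $K$ acts transitively on $\mathcal{M}$ (for $\mathbb{R}^n$ the group $K$ is trivial and fixes the identity). A correct version needs an added hypothesis such as semisimplicity of each $\operatorname{ad}(x)$ together with nondegeneracy (negative-definiteness) of the trace form, or simply the existence of an invariant positive-definite inner product; your strategy would then still need a replacement for the bounded-geodesic step, e.g.\ the paper's transitivity argument or Nagy's compactness criteria for Moufang loops.
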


\begin{proof}
Because $\operatorname{ad}(x)$ has purely imaginary spectrum and $\mathfrak{M}$ is finite-dimensional, the one-parameter group $t \mapsto e^{t\,\operatorname{ad}(x)}$ is relatively compact in $\operatorname{GL}(\mathfrak{M})$. Let $K$ denote the closure of the subgroup of $\operatorname{Aut}(\mathfrak{M})$ generated by all such one-parameter groups; then $K$ is a compact Lie group.

By \cite[Theorem 1]{SabininMikheev1982}, there exists a unique simply connected analytic Moufang loop $(\mathcal{M}, \circ)$ with tangent algebra $\mathfrak{M} = T_e\mathcal{M}$. Moreover, the exponential map $\exp \colon \mathfrak{M} \to \mathcal{M}$ is a local real-analytic diffeomorphism near $0$, and the flow $\Phi_t$ of $X_x$ satisfies
\[
\Phi_t(\exp(y)) = \exp\bigl(e^{t\,\operatorname{ad}(x)}(y)\bigr)
\]
for all $y$ in a neighborhood of $0$ in $\mathfrak{M}$ (see also \cite[Theorem 4.6]{PerezIzquierdo2005}).

Since $e^{t\,\operatorname{ad}(x)} \in K$ for all $t$, and $K$ is compact, the orbit $\{e^{t\,\operatorname{ad}(x)}(y) : t \in \mathbb{R}\}$ is relatively compact in $\mathfrak{M}$. Because $\exp$ is continuous and $\mathcal{M}$ is generated by $\exp(\mathfrak{M})$ (as $\mathcal{M}$ is simply connected), the orbit $\{\Phi_t(p) : t \in \mathbb{R}\}$ is relatively compact for every $p \in \mathcal{M}$, establishing (iii).

To prove (i), observe that the compact group $K$ acts analytically on $\mathcal{M}$ by left translations: for $\alpha \in K$, the map $L_\alpha \colon \mathcal{M} \to \mathcal{M}$ is defined by $L_\alpha(\exp(y)) = \exp(\alpha(y))$ and extended by analytic continuation. This action is transitive because $\mathcal{M}$ is connected and simply connected. Let $K_e = \{\alpha \in K : L_\alpha(e) = e\}$ be the isotropy subgroup at the identity; then $K_e$ is closed, hence compact. By the homogeneous space theorem, $\mathcal{M}$ is diffeomorphic to $K / K_e$, and therefore compact.

Finally, (ii) follows from (i): on a compact manifold, every smooth vector field is complete.
\end{proof}

\subsection{Recurrent and quasi-periodic Malcev dynamics}

\begin{definition}
A dynamical system on a Malcev manifold $\mathcal{M}$ generated by a vector field $X_x$ is called \emph{quasi-periodic} if for every $p \in \mathcal{M}$, the closure of the orbit $\{\Phi_t(p) : t \in \mathbb{R}\}$ is diffeomorphic to a finite-dimensional torus $\mathbb{T}^k$ for some $k \geq 1$.
\end{definition}

\begin{remark}\label{rem:quasi-periodic}
If $\mathcal{M}$ is a compact analytic Moufang loop (e.g.\ $S^7$), it admits a bi-invariant Riemannian metric~\cite{Baez2002}. With respect to the induced inner product on $\mathfrak{M} = T_e \mathcal{M}$, each $\mathrm{ad}(x)$ is skew-symmetric, so $\sigma(\mathrm{ad}(x)) \subset i\mathbb{R}$ is purely imaginary and discrete. The flow $\Phi_t$ is then smoothly conjugate to a linear flow on a maximal torus of the isometry group $\mathrm{Spin}(7) \subset \mathrm{SO}(8)$. The dimension $k$ of the torus $\mathbb{T}^k$ equals the number of \emph{rationally independent} frequencies among the eigenvalues of $\mathrm{ad}(x)$.
\end{remark}

\begin{example}\label{ex:s7-dynamics}
Consider $\mathcal{M} = S^7$ equipped with the Moufang loop structure induced by the unit octonions. Its tangent algebra $\mathfrak{M}(S^7)$ is isomorphic to the imaginary octonions $\mathrm{Im}(\mathbb{O})$ endowed with the commutator bracket $[x,y] = \tfrac12(xy - yx)$. For any non-zero $x \in \mathfrak{M}(S^7)$, the operator $\mathrm{ad}(x)$ has eigenvalues $0$ (multiplicity 1) and $\pm i\|x\|$ (each of multiplicity 3). Since all non-zero eigenvalues are integer multiples of the single frequency $\|x\|$, the associated flow $\Phi_t$ is strictly periodic with minimal period $2\pi/\|x\|$. Consequently, the orbit closure is diffeomorphic to $\mathbb{T}^1$. Thus $\mathfrak{M}(S^7)$ is an almost periodic Malcev algebra, and $S^7$ carries a natural periodic (hence quasi-periodic) dynamical system.
\end{example}

\begin{corollary}\label{cor:compact-loop-ap}
Let $(\mathcal{M}, \circ)$ be a compact, connected, simply connected analytic Moufang loop. Then its tangent Malcev algebra $\mathfrak{M} = T_e\mathcal{M}$ is almost periodic in the sense of Definition~\ref{def:ap-algebra}.
\end{corollary}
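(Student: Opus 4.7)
\medskip
\noindent\textbf{Proposed proof plan.}
The plan is to reduce the corollary to a direct consequence of Proposition~\ref{prop:flow-ap} and Proposition~\ref{prop:spectral-ap}, by producing an $\operatorname{ad}$-invariant inner product on $\mathfrak{M}$ from the compactness of $\mathcal{M}$. First I would observe that compactness of $\mathcal{M}$ forces $\dim \mathfrak{M} < \infty$, so any norm makes $\mathfrak{M}$ a Banach--Malcev algebra and the bracket is automatically jointly continuous. Thus it suffices to verify the orbit-compactness criterion of Definition~\ref{def:ap-algebra} for each $x,y\in\mathfrak{M}$.

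The core step is to exhibit a Euclidean inner product $\langle\cdot,\cdot\rangle$ on $\mathfrak{M}$ with respect to which every $\operatorname{ad}(x)$ is skew-symmetric. For this I would invoke Nagy's bi-invariant Haar measure on $\mathcal{M}$ (recalled in the Introduction) to average an arbitrary Riemannian metric on $\mathcal{M}$ into a bi-invariant one $g$; the existence of such a bi-invariant metric on compact analytic Moufang loops is recorded in Baez~\cite{Baez2002}. Restriction of $g$ to $T_e\mathcal{M}$ gives $\langle\cdot,\cdot\rangle$. To pass from bi-invariance of $g$ to skew-symmetry of $\operatorname{ad}(x)$, I would use the local diffeomorphism property of $\exp$ together with the flow identity
\[
\Phi_t(\exp(y)) = \exp\bigl(e^{t\,\operatorname{ad}(x)}(y)\bigr),
\]
already invoked in the proof of Proposition~\ref{prop:flow-ap}. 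Since $\Phi_t$ is generated by the left-invariant field $X_x$, bi-invariance of $g$ implies that $\Phi_t$ acts by isometries in a neighborhood of $e$; differentiating the identity $g_e(e^{t\,\operatorname{ad}(x)}y, e^{t\,\operatorname{ad}(x)}z) = g_e(y,z)$ at $t=0$ yields $\langle \operatorname{ad}(x)y, z\rangle + \langle y, \operatorname{ad}(x)z\rangle = 0$, as required.

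Once skew-symmetry is established, each $e^{t\,\operatorname{ad}(x)}$ is an orthogonal transformation of the finite-dimensional Euclidean space $\mathfrak{M}$, so the orbit $\{e^{t\,\operatorname{ad}(x)}(y):t\in\mathbb{R}\}$ lies on the sphere of radius $\|y\|$ and is therefore relatively compact. This is exactly the condition of Definition~\ref{def:ap-algebra}, giving almost periodicity of $\mathfrak{M}$; as a byproduct we also recover $\sigma(\operatorname{ad}(x))\subseteq i\mathbb{R}$ via Proposition~\ref{prop:spectral-ap}.

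The main obstacle I anticipate is the step passing from bi-invariance of the metric on $\mathcal{M}$ to $\operatorname{ad}$-skew-symmetry on $\mathfrak{M}$. In the Lie case this is textbook, but in the Moufang setting left and right translations are not automorphisms, and one must argue at the infinitesimal level rather than by the conjugation identity $\operatorname{Ad}(g)=e^{\operatorname{ad}(x)}$. The cleanest way around this is to work entirely through the flow $\Phi_t$ of $X_x$---which is well defined and preserves $g$ by bi-invariance---and to read off skew-symmetry by differentiating the local conjugacy $\Phi_t\circ \exp = \exp\circ\, e^{t\,\operatorname{ad}(x)}$ supplied by Sabinin--Mikheev. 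If a reader prefers to avoid the metric entirely, an alternative route would be to note that $\mathcal{M}$ compact forces each one-parameter subgroup $\{\exp(tx)\}$ to have compact closure, hence a torus, and to transfer compactness back to $\mathfrak{M}$ via the local diffeomorphism $\exp$; this requires slightly more care at points where $\exp$ is not globally injective, which is why the bi-invariant-metric route is preferable.
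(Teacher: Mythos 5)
Your proposal follows essentially the same route as the paper: compactness of $\mathcal{M}$ yields a bi-invariant metric whose restriction to $T_e\mathcal{M}$ makes each $\operatorname{ad}(x)$ skew-symmetric, whence the orbits $\{e^{t\,\operatorname{ad}(x)}(y)\}$ are bounded in a finite-dimensional space and therefore relatively compact. Your write-up is in fact slightly more careful than the paper's at two points: you justify the passage from bi-invariance of the metric to skew-symmetry of $\operatorname{ad}(x)$ by differentiating the Sabinin--Mikheev conjugacy $\Phi_t\circ\exp=\exp\circ\, e^{t\,\operatorname{ad}(x)}$ (the paper merely asserts this step, which is not automatic for Moufang loops since translations are not automorphisms), and you deduce orbit compactness directly from orthogonality (orbits lie on spheres) rather than via the paper's appeal to Proposition~\ref{prop:spectral-ap}, which as stated gives the implication in the opposite direction (almost periodicity implies imaginary spectrum) and so does not literally supply the step the paper attributes to it.
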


\begin{proof}
Since $\mathcal{M}$ is compact, it admits a bi-invariant Riemannian metric \cite{Baez2002}. The induced inner product on $\mathfrak{M} = T_e\mathcal{M}$ makes each adjoint operator $\operatorname{ad}(x)$ skew-symmetric, hence $\operatorname{ad}(x)$ has purely imaginary spectrum for all $x \in \mathfrak{M}$. Because $\mathfrak{M}$ is finite-dimensional (as $\mathcal{M}$ is a finite-dimensional manifold), the one-parameter group $t \mapsto e^{t\,\operatorname{ad}(x)}$ is relatively compact in $\operatorname{GL}(\mathfrak{M})$. By Proposition~\ref{prop:spectral-ap}, this implies that for every $x, y \in \mathfrak{M}$, the orbit $\{e^{t\,\operatorname{ad}(x)}(y) : t \in \mathbb{R}\}$ is relatively compact in $\mathfrak{M}$. Thus $\mathfrak{M}$ is an almost periodic Malcev algebra by Definition~\ref{def:ap-algebra}.
\end{proof}

\medskip

\section{Preliminary Framework for Non-Lie Actions}\label{sec:structural-actions}

In the absence of a universal enveloping algebra satisfying the Poincaré–Birkhoff–Witt property, a representation theory for Malcev algebras analogous to that of Lie algebras cannot be developed in the usual algebraic sense. Nevertheless, the geometry of compact Moufang loops—particularly the 7-sphere $S^7$—naturally induces linear actions on finite-dimensional eigenspaces of the Malcev Laplacian. These actions reflect the intrinsic non-associative defect $S(x,y)$ introduced in Section~\ref{sec:lie-deviation} and provide a concrete functional-analytic setting in which the deviation from the Lie homomorphism property can be quantitatively controlled.

The purpose of this section is \emph{not} to propose a general representation theory, but to analyze the specific class of operators arising from spectral theory on $S^7$. As emphasized throughout, this is best viewed as a \emph{case study}. Indeed, to date, the geometric construction on $S^7$ remains the only known source of nontrivial examples fitting the framework below; this limitation is acknowledged and forms the basis of our cautious formulation.

\begin{definition}\label{def:structural-action}
Let $\mathfrak{M}$ be a Banach--Malcev algebra and $X$ a Banach space. A bounded linear map $\pi \colon \mathfrak{M} \to \mathcal{B}(X)$ is called a \emph{structural action} if there exists a bounded bilinear map $T \colon \mathfrak{M} \times \mathfrak{M} \to \mathcal{B}(X)$ such that
\[
[\pi(x), \pi(y)] = \pi([x,y]) + T(x,y), \quad \forall\, x,y \in \mathfrak{M},
\]
and a constant $C > 0$ for which
\[
\|T(x,y)\|_{\mathcal{B}(X)} \leq C \, \|S(x,y)\|_{\mathrm{op}}, \quad \forall\, x,y \in \mathfrak{M}.
\]
\end{definition}

This notion does not constitute a full-fledged representation theory; rather, it furnishes a minimal framework to track how the failure of the adjoint map $\operatorname{ad} \colon \mathfrak{M} \to \operatorname{Der}(\mathfrak{M})$ to be a Lie algebra homomorphism propagates through linear actions. The inequality guarantees that the operator-valued defect $T$ is dominated by the intrinsic algebraic defect $S$, which vanishes identically precisely when $\mathfrak{M}$ is a Lie algebra. We stress that the condition on $T$ is \emph{not required} for orbit compactness; its role is purely to \emph{measure} the deviation in cases where such actions arise geometrically.

\begin{example}[Regular structural action on a finite-dimensional eigenspace of $S^7$]\label{ex:regular-action}
Let $\mathcal{M} = S^7$ be equipped with its canonical Moufang loop structure induced by the unit octonions, and let $\mathfrak{M} = \operatorname{Im}(\mathbb{O})$ denote its tangent Malcev algebra. Fix an eigenvalue $\lambda > 0$ of the Malcev Laplacian
\[
\Delta = -\sum_{i=1}^7 X_{e_i}^2,
\]
where $\{e_1,\dots,e_7\}$ is an orthonormal basis of $\mathfrak{M}$ and $X_{e_i}$ the corresponding left-invariant vector fields. By Proposition~\ref{prop:malcev-laplacian}, the eigenspace
\[
X := E_\lambda = \ker(\Delta - \lambda I) \subset L^2(S^7)
\]
is finite-dimensional, consists of smooth functions, and is invariant under the infinitesimal action $x \mapsto X_x$.

Define $\pi \colon \mathfrak{M} \to \mathcal{B}(X)$ by
\[
\pi(x) := X_x|_{E_\lambda},
\]
the restriction of the left-invariant vector field to $E_\lambda$. Since $E_\lambda$ is finite-dimensional and invariant, $\pi(x) \in \mathcal{B}(X)$ for all $x \in \mathfrak{M}$, and the map $\pi$ is bounded.

Now set
\[
T(x,y) := [\pi(x), \pi(y)] - \pi([x,y]) = [X_x, X_y] - X_{[x,y]} \big|_{E_\lambda}.
\]
Then:
\begin{enumerate}
    \item The identity $[\pi(x), \pi(y)] = \pi([x,y]) + T(x,y)$ holds by construction.
    \item As shown in Section~\ref{sec:lie-deviation} (Definition~\ref{Dm}), $\|S(e_i,e_j)\|_{\mathrm{op}} = 2$ for any orthonormal pair $(e_i,e_j)$ in $\mathfrak{M}$.
    \item On the first nontrivial eigenspace (spherical harmonics of degree~1, $\lambda = 7$), one verifies $\|T(e_i,e_j)\|_{\mathcal{B}(E_\lambda)} = 2$. Hence the optimal constant in Definition~\ref{def:structural-action} is $C = 1$.
\end{enumerate}
Thus $(\pi, T)$ defines a nontrivial structural action, with $T \not\equiv 0$ reflecting the intrinsic non-associativity of $\mathfrak{M}$. We emphasize that this example is the \emph{only known nontrivial realization} of Definition~\ref{def:structural-action}; no other families of such actions are currently known.
\end{example}

\begin{proposition}\label{prop:transfer-ap}
Let $\mathfrak{M}$ be an almost periodic Banach--Malcev algebra, and let $\pi \colon \mathfrak{M} \to \mathcal{B}(X)$ be a structural action. Then for every $x, y \in \mathfrak{M}$, the orbit
\[
\left\{ \pi\bigl( e^{t\,\operatorname{ad}(x)}(y) \bigr) : t \in \mathbb{R} \right\}
\]
is relatively compact in $\mathcal{B}(X)$. In particular, the image of $\mathfrak{M}$ under $\pi$ inherits almost periodicity in the sense of Definition~\ref{def:ap-algebra}.
\end{proposition}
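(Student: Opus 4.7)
The plan is to reduce the statement to the same continuity argument used in Proposition~\ref{prop:subspace}(3); the full strength of the structural-action hypothesis --- in particular the operator bound $\|T(x,y)\|_{\mathcal{B}(X)} \le C\|S(x,y)\|_{\mathrm{op}}$ --- plays no role in establishing compactness and enters only as a bookkeeping device for how $\pi$ fails to intertwine brackets.

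First I would fix $x, y \in \mathfrak{M}$ and invoke Definition~\ref{def:ap-algebra}: the orbit $\mathcal{O}_x(y) = \{e^{t\,\mathrm{ad}(x)}(y) : t \in \mathbb{R}\}$ is relatively compact in $\mathfrak{M}$, so its norm closure $K \subset \mathfrak{M}$ is compact. Since every structural action is by Definition~\ref{def:structural-action} a bounded (hence norm-continuous) linear map $\pi \colon \mathfrak{M} \to \mathcal{B}(X)$, the image $\pi(K)$ is compact in $\mathcal{B}(X)$, and it already contains the entire orbit $\{\pi(e^{t\,\mathrm{ad}(x)}(y)) : t \in \mathbb{R}\}$. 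Relative compactness of the latter follows at once --- this is essentially the same mechanism as Proposition~\ref{prop:subspace}(3), specialized to $\pi$.

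For the ``in particular'' clause I would flag a semantic issue: $\pi(\mathfrak{M})$ is not literally a Banach--Malcev subalgebra of $\mathcal{B}(X)$, since the defining identity $[\pi(x),\pi(y)] = \pi([x,y]) + T(x,y)$ allows the operator commutator to escape $\pi(\mathfrak{M})$. The cleanest reading of ``inherits almost periodicity'' is therefore as a statement about the \emph{transported flow} $t \mapsto \pi(e^{t\,\mathrm{ad}(x)}(y))$, which satisfies exactly the orbit-compactness criterion of Definition~\ref{def:ap-algebra} --- and this is precisely what the main assertion provides. Alternatively, when $\pi$ is injective one may transport the Malcev bracket of $\mathfrak{M}$ to $\pi(\mathfrak{M})$ via $\pi$ itself, making $\pi(\mathfrak{M})$ an isomorphic Malcev copy of $\mathfrak{M}$ on which the inheritance is literal.

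The main obstacle, if one can call it that, is purely interpretive rather than analytical: the compactness step is a one-line continuity argument, and the only real delicacy lies in phrasing the image statement so that it does not implicitly claim more algebraic structure on $\pi(\mathfrak{M})$ than the structural-action framework supplies.
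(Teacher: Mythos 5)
Your proposal is correct and follows essentially the same route as the paper: both reduce the claim to the continuity of the bounded linear map $\pi$ applied to the relatively compact orbit guaranteed by Definition~\ref{def:ap-algebra}, and both explicitly note that the defect bound $\|T(x,y)\| \le C\|S(x,y)\|_{\mathrm{op}}$ plays no role in the compactness argument. Your additional remarks on how to read the ``in particular'' clause are a reasonable clarification but do not change the substance of the argument.
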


\begin{proof}
Since $\pi$ is bounded, there exists $K > 0$ such that $\|\pi(z)\|_{\mathcal{B}(X)} \leq K \|z\|_{\mathfrak{M}}$ for all $z \in \mathfrak{M}$. By Definition~\ref{def:ap-algebra}, the set $\{ e^{t\,\operatorname{ad}(x)}(y) : t \in \mathbb{R} \}$ is relatively compact in $\mathfrak{M}$. The continuity of $\pi$ then implies that its image is relatively compact in $\mathcal{B}(X)$, as required.

We emphasize that this stability of almost periodicity relies \emph{only} on the continuity of $\pi$ and the orbit-compactness in $\mathfrak{M}$; it does \emph{not} require the structural condition $\|T(x,y)\| \leq C \|S(x,y)\|_{\mathrm{op}}$. That condition serves a different purpose: it quantifies how the failure of $\pi$ to be a homomorphism is controlled by the intrinsic algebraic defect $S$. In this sense, Proposition~\ref{prop:transfer-ap} is a general fact about continuous maps, while Definition~\ref{def:structural-action} provides a refinement tailored to the octonionic example.
\end{proof}

This result justifies the use of such actions in spectral analysis, even though they do not form a representation in the Lie-theoretic sense. The notion of structural action is thus not needed for compactness, but only to \emph{measure} the algebraic deviation in the unique known example.

\begin{proposition}[Spectral theory of the Malcev Laplacian]\label{prop:malcev-laplacian}
Let $(\mathcal{M},\circ)$ be a compact, connected, simply connected analytic Moufang loop of dimension $n$, and let $\mathfrak{M} = T_e\mathcal{M}$ be its tangent Banach--Malcev algebra. Equip $\mathcal{M}$ with its bi-invariant Riemannian metric (which exists by \cite{Nagy2007}), and fix an orthonormal basis $\{e_1,\dots,e_n\}$ of $\mathfrak{M}$. Define the \emph{Malcev Laplacian} on smooth functions by
\[
\Delta := -\sum_{i=1}^n X_{e_i}^2,
\]
where $X_{e_i}$ denotes the left-invariant vector field generated by $e_i$.

Then the following hold:
\begin{enumerate}
    \item $\Delta$ is essentially self-adjoint on $L^2(\mathcal{M})$ and admits a discrete spectrum
    \[
    \sigma(\Delta) = \{0 = \lambda_0 < \lambda_1 < \lambda_2 < \cdots\},
    \]
    where each eigenvalue $\lambda_k$ has finite multiplicity and $\lambda_k \to \infty$ as $k \to \infty$.
    
    \item $L^2(\mathcal{M})$ possesses an orthonormal basis $\{\psi_{k,\alpha}\}$ of smooth eigenfunctions,
    \[
    \Delta \psi_{k,\alpha} = \lambda_k \psi_{k,\alpha},
    \]
    where $\alpha$ indexes an orthonormal basis of the eigenspace $E_{\lambda_k} = \ker(\Delta - \lambda_k I)$.
    
    \item Each eigenspace $E_{\lambda_k}$ is invariant under the infinitesimal action $x \mapsto X_x|_{E_{\lambda_k}}$, and the induced action of $\mathfrak{M}$ on $E_{\lambda_k}$ is almost periodic. In particular, for any $x,y \in \mathfrak{M}$, the operator
    \[
    T(x,y) := [X_x, X_y] - X_{[x,y]}
    \]
    restricts to a bounded linear map on $E_{\lambda_k}$, and
    \[
    \|T(x,y)\|_{\mathcal{B}(E_{\lambda_k})} \leq C_k \, \|S(x,y)\|_{\mathrm{op}},
    \]
    where $C_k > 0$ depends only on the eigenvalue $\lambda_k$.
\end{enumerate}
\end{proposition}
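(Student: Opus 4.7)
The plan is to reduce parts (1) and (2) to classical elliptic spectral theory on compact Riemannian manifolds, and then to obtain part (3) from an analysis of how the algebraic defect $S(x,y)$ controls the vector-field defect $T(x,y)$. The key preparatory observation is that $\Delta = -\sum_i X_{e_i}^2$ coincides with the Laplace--Beltrami operator $\Delta_{\mathrm{LB}}$ of the bi-invariant metric. Left-invariance of the metric together with orthonormality of $\{e_i\}$ in $T_e\mathcal{M}$ makes $\{X_{e_i}\}$ a global orthonormal frame; the bi-invariant volume produced in \cite{Nagy2007} is preserved by the flow of each $X_{e_i}$ (which is right translation by $\exp(te_i)$, an isometry), so $\operatorname{div}(X_{e_i}) = 0$. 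Expanding $\Delta_{\mathrm{LB}}$ in this frame, the cross terms drop out by divergence-freeness and one is left with $-\sum_i X_{e_i}^2$.

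With this identification in hand, parts (1) and (2) are a routine application of standard theory: the Friedrichs extension together with Rellich's compact embedding $H^1(\mathcal{M}) \hookrightarrow L^2(\mathcal{M})$ and the spectral theorem for operators with compact resolvent give essential self-adjointness, a discrete spectrum $0 = \lambda_0 < \lambda_1 < \cdots \to \infty$ with finite-dimensional eigenspaces, and an orthonormal basis of eigenfunctions; elliptic regularity applied to $(\Delta - \lambda_k I)\psi_{k,\alpha} = 0$ then forces each $\psi_{k,\alpha}$ to be smooth.

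For part (3), because $\Delta$ is determined by the bi-invariant metric and the flow of $X_x$ is isometric, $X_x$ commutes with $\Delta$ on $C^\infty(\mathcal{M})$. Hence each finite-dimensional $E_{\lambda_k}$ is invariant under $X_x$, and $\pi(x) := X_x|_{E_{\lambda_k}}$ is a well-defined bounded operator. The vector field $T(x,y) = [X_x, X_y] - X_{[x,y]}$ vanishes at $e$ by the defining identity $[X_x, X_y]|_e = [x,y]$ of the Malcev bracket, also commutes with $\Delta$, and therefore restricts to each $E_{\lambda_k}$. The structural content I would extract from the Sabinin--Mikheev formalism (and verify explicitly in the octonionic case along the lines of Appendix~A) is the pointwise identification of $T(x,y)$ as a first-order differential operator whose coefficient at $p$ is the left-translate of an element of $\mathfrak{M}$ of norm at most $\|S(x,y)\|_{\mathrm{op}}$. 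Combined with the G{\aa}rding identity $\|\nabla f\|_{L^2}^2 = \langle \Delta f, f \rangle = \lambda_k \|f\|_{L^2}^2$ for $f \in E_{\lambda_k}$, this yields $\|T(x,y) f\|_{L^2} \leq \|S(x,y)\|_{\mathrm{op}} \sqrt{\lambda_k} \|f\|_{L^2}$, so one may take $C_k = \sqrt{\lambda_k}$.

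The hard part is precisely the identification of $T(x,y)$ with a first-order operator controlled pointwise by $S(x,y)$. In the Lie case this step is vacuous, since $[X_x, X_y] = X_{[x,y]}$ globally, so the Moufang analogue requires a genuine structural input: the failure of $x \mapsto X_x$ to be a homomorphism from $\mathfrak{M}$ into the Lie algebra of vector fields is exactly the obstruction encoded by $S(x,y)$. I would argue this by comparing the low-order terms of the Moufang BCH-type expansion of the loop product on both sides, using the convergence estimates of \cite{Athmouni2025} to legitimize the formal identification on a neighborhood of $e$, and then propagating to all of $\mathcal{M}$ by left-translation. This is the step where the non-associative geometry genuinely enters the argument, and it is where the algebraic defect of Section~\ref{sec:lie-deviation} converts into the analytic deficit of the left-invariant frame.
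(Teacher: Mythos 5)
Your handling of parts (1)--(2) and of the invariance statement in (3) coincides with the paper's: both reduce (1)--(2) to standard elliptic theory on a compact manifold, and both derive invariance of $E_{\lambda_k}$ from $[X_x,\Delta]=0$. (A cosmetic slip: the flow of $X_{e_i}$ is \emph{left} translation $p\mapsto\exp(te_i)\circ p$, not right translation; since both are isometries of the bi-invariant metric this does not affect your divergence-free argument.) However, you never address the explicit claim in (3) that the induced action of $\mathfrak{M}$ on $E_{\lambda_k}$ is \emph{almost periodic}; the paper obtains this from Corollary~\ref{cor:compact-loop-ap} together with finite-dimensionality of $E_{\lambda_k}$ (equivalently, $e^{tX_x}|_{E_{\lambda_k}}$ is a one-parameter unitary group on a finite-dimensional space, so its orbits are relatively compact). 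This is a one-line repair, but it is missing.

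The substantive gap is in the norm estimate. Your entire argument hinges on the ``pointwise identification of $T(x,y)$ as a first-order operator whose coefficient at $p$ is the left-translate of an element of $\mathfrak{M}$ of norm at most $\|S(x,y)\|_{\mathrm{op}}$,'' and this is precisely the step you do not carry out. Since $T(x,y)$ vanishes at $e$ (as you note, $[X_x,X_y]|_e=[x,y]$) yet is not identically zero, its coefficient field is genuinely position-dependent, and the BCH-type expansion of the associativity defect away from $e$ produces higher-order Jacobian terms in $x$, $y$ and $\log p$ that are not visibly dominated by $\|S(x,y)\|_{\mathrm{op}}$ for a \emph{fixed} pair $(x,y)$ --- and the required inequality is per pair, with $C_k$ independent of $(x,y)$, not merely a bound by the global defect norm $\|S\|$. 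Your G\aa rding-type step ($\sum_i\|X_{e_i}f\|_{L^2}^2=\lambda_k\|f\|_{L^2}^2$ for $f\in E_{\lambda_k}$, yielding $C_k=\sqrt{\lambda_k}$) is correct and in fact sharper than anything the paper offers, but it is conditional on the unproved pointwise bound; the appendix of the paper only checks a symbol-level statement for octonionic basis triples, not a uniform pointwise estimate. The paper itself closes this step by a soft appeal to joint continuity of the bracket and equivalence of norms in finite dimensions --- terse, but it does not stake the proof on the strong quantitative claim your route requires. As written, your proof of $\|T(x,y)\|_{\mathcal{B}(E_{\lambda_k})}\le C_k\,\|S(x,y)\|_{\mathrm{op}}$ is therefore incomplete.
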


\begin{proof}
Parts (1)–(2) follow from standard elliptic theory on compact manifolds (see, e.g., \cite[Ch.~5]{Taylor1996}).

For (3), note that the bi-invariance of the metric implies $[X_x, \Delta] = 0$ for all $x \in \mathfrak{M}$; thus each eigenspace $E_{\lambda_k}$ is invariant under the flow of $X_x$. By Corollary~\ref{cor:compact-loop-ap}, $\mathfrak{M}$ is almost periodic, so for every $x,y \in \mathfrak{M}$, the orbit $\{e^{t\,\mathrm{ad}(x)}(y) : t \in \mathbb{R}\}$ is relatively compact in $\mathfrak{M}$. Because $E_{\lambda_k}$ is finite-dimensional, the integrated action $e^{t X_x}$ has relatively compact image in $\mathrm{GL}(E_{\lambda_k})$, which yields almost periodicity of the restricted action.

The identity $[X_x, X_y] = X_{[x,y]} + T(x,y)$ holds pointwise on $C^\infty(\mathcal{M})$. Since $E_{\lambda_k}$ is finite-dimensional and invariant, $T(x,y)$ maps $E_{\lambda_k}$ into itself. The norm estimate follows from the joint continuity of the Malcev bracket and the equivalence of norms on finite-dimensional spaces.
\end{proof}
\begin{remark}[Outlook]
The framework of structural actions introduced in this section is deliberately minimal and tailored to the octonionic example. A broader algebraic theory would require substantial new developments, including:
\begin{enumerate}
    \item a Malcev analogue of Schur’s lemma or a notion of complete reducibility;
    \item a cohomological framework capable of encoding the defect $T$;
    \item a systematic link with the universal enveloping algebra of Pérez-Izquierdo–Shestakov~\cite{PerezIzquierdo2005}.
\end{enumerate}
None of these directions are pursued here. The present work is confined to the geometric analysis of the unique known example: the action on $S^7$.
\end{remark}

\begin{remark}[Formal cohomological condition]
One may formally define a coboundary operator $\delta$ on bilinear maps $T\colon \mathfrak{M}\times \mathfrak{M}\to \mathcal{B}(X)$ by
\[
(\delta T)(x,y,z) := [\pi(x),T(y,z)] + [\pi(y),T(z,x)] + [\pi(z),T(x,y)] 
- T([x,y],z) - T([y,z],x) - T([z,x],y).
\]
The equation $\delta T = 0$ then resembles a cocycle condition. However, this condition is **not verified** in general for the structural actions defined in Section~\ref{sec:structural-actions}. In the octonionic example (Appendix~\ref{app:defect}), $\delta T$ vanishes identically only because both sides of the identity are zero—this is a consequence of the Jacobi identity for vector fields and the specific algebraic relations in $\operatorname{Im}(\mathbb{O})$, not a general cohomological principle.

While this formalism is compatible with the embedding of $\mathfrak{M}$ into the Lie algebra $\mathfrak{g}(\mathfrak{M})$ of Pérez-Izquierdo–Shestakov~\cite{PerezIzquierdo2005}, no cohomology theory for Malcev algebras currently exists that would make $\delta T = 0$ into a meaningful constraint. We therefore present this operator only as a symbolic guide for future investigation, **not** as part of the established theory in this paper.
\end{remark}
\section{Applications}\label{sec:applications}

Almost periodic Malcev algebras provide a natural framework for extending classical notions of symmetry, recurrence, and spectral analysis beyond the associative (Lie) paradigm. In this section, we discuss three concrete domains where such structures arise or may be fruitfully applied: (i) spectral analysis on compact Malcev manifolds, (ii) dynamical systems modeled on the octonions, and (iii) formal extensions of gauge symmetry in non-associative field theory. While the first two admit mathematically rigorous formulations, the third remains largely conjectural and is presented as a research direction.

\subsection{Spectral analysis on almost periodic Malcev manifolds}
\label{Mar1}

Let $(\mathcal{M},\circ)$ be a compact, connected, simply connected analytic Moufang loop of dimension $n$, and let $\mathfrak{M} = T_e \mathcal{M}$ be its tangent Malcev algebra. By Proposition~\ref{prop:flow-ap}, $\mathfrak{M}$ is almost periodic. Compactness of $\mathcal{M}$ implies the existence of a bi-invariant Riemannian metric, unique up to scale when $\mathcal{M} = S^7$ \cite{Baez2002}.

Fix an orthonormal basis $\{e_1,\dots,e_n\}$ of $\mathfrak{M}$ with respect to the inner product induced by the metric, and define the \emph{Malcev Laplacian}
\[
\Delta := -\sum_{i=1}^n X_{e_i}^2,
\]
where $X_{e_i}$ is the smooth left-invariant vector field generated by $e_i$. Since $\mathcal{M}$ is compact and the bracket is smooth, each $X_{e_i}$ is a smooth vector field; hence $\Delta$ is a second-order elliptic differential operator with smooth coefficients, symmetric on $C^\infty(\mathcal{M})$, and essentially self-adjoint on $L^2(\mathcal{M})$.

\begin{proposition}\label{prop:laplacian}
The operator $\Delta$ satisfies:
\begin{enumerate}
    \item $\sigma(\Delta) \subset [0,\infty)$ is discrete, consists of eigenvalues of finite multiplicity, and accumulates only at $+\infty$;
    \item $L^2(\mathcal{M})$ admits an orthonormal basis $\{\psi_\lambda\}_{\lambda \in \sigma(\Delta)}$ of smooth eigenfunctions, $\Delta \psi_\lambda = \lambda \psi_\lambda$;
    \item Each eigenfunction $\psi_\lambda$ is \emph{almost periodic} in the sense that the orbit
    \[
    \bigl\{ X_x \psi_\lambda \mid x \in \mathfrak{M} \bigr\}
    \]
    spans a finite-dimensional subspace of $C^\infty(\mathcal{M})$, i.e.\ the infinitesimal $\mathfrak{M}$-action restricts to a finite-dimensional representation on each eigenspace.
\end{enumerate}
\end{proposition}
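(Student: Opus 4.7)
The plan is to reduce the proposition to the standard template of elliptic spectral theory on compact manifolds, with the third assertion following from a commutation argument that is the only genuinely new point in the Moufang setting. The overall strategy mirrors the classical treatment on compact Lie groups, but every invocation of the Lie homomorphism property must be replaced by the weaker structural inputs available for Malcev algebras, namely bi-invariance of the Haar measure (Nagy~\cite{Nagy2007}) and the commutation identity already recorded in Proposition~\ref{prop:malcev-laplacian}.

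First I would verify that $\Delta$ is a genuine second-order symmetric elliptic operator on $L^2(\mathcal{M})$. Because $\mathcal{M}$ is compact analytic and each $X_{e_i}$ is left-invariant, at every point $p \in \mathcal{M}$ the family $\{X_{e_i}(p)\}_{i=1}^n$ is the image of the basis $\{e_i\}$ under the linear isomorphism $dL_p|_e\colon \mathfrak{M} \to T_p\mathcal{M}$ and therefore a basis of $T_p\mathcal{M}$. The principal symbol of $\Delta$ at $(p,\xi)$ is $\sum_i \xi(X_{e_i}(p))^2$, which is strictly positive for $\xi \neq 0$, so $\Delta$ is elliptic. Skew-adjointness of each $X_{e_i}$ with respect to the bi-invariant Haar measure yields the symmetry of $\sum X_{e_i}^2$ on $C^\infty(\mathcal{M})$ via integration by parts with no boundary terms. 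Standard elliptic theory on compact manifolds (e.g.~\cite[Ch.~5]{Taylor1996}) then upgrades symmetry to essential self-adjointness and shows that the resolvent $(I+\Delta)^{-1}$ is compact on $L^2(\mathcal{M})$.

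Once self-adjointness and compact resolvent are secured, parts~(1) and~(2) follow directly from the spectral theorem for compact self-adjoint operators: the spectrum is a discrete non-negative sequence with eigenvalues of finite multiplicity accumulating only at $+\infty$, and $L^2(\mathcal{M})$ admits a corresponding orthonormal eigenbasis. Elliptic regularity promotes each eigenfunction to an element of $C^\infty(\mathcal{M})$. These steps are parallel to the treatment for compact Lie groups and require no modification for the Malcev setting.

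Part~(3) is where the non-associative character enters. The decisive ingredient is the commutation $[X_x, \Delta] = 0$ for every $x \in \mathfrak{M}$, established in Proposition~\ref{prop:malcev-laplacian}. Granting this, $X_x$ preserves each eigenspace $E_\lambda = \ker(\Delta - \lambda I)$, and since $\dim E_\lambda < \infty$ by part~(1), the span $\{X_x \psi_\lambda : x \in \mathfrak{M}\}$ sits inside $E_\lambda$ and is automatically finite-dimensional. The hard part will be ensuring the commutation is established without circularity. In the Lie case, the expansion of $[X_x, \sum_i X_{e_i}^2]$ vanishes by a Casimir argument that uses only the skew-symmetry of $\operatorname{ad}(x)$ under the bi-invariant inner product together with $[X_x, X_{e_i}] = X_{[x,e_i]}$. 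In the Malcev case, the additional defect $T(x, e_i) = [X_x, X_{e_i}] - X_{[x, e_i]}$ produces terms $\sum_i (T(x,e_i) X_{e_i} + X_{e_i} T(x,e_i))$ whose vanishing is not a consequence of skew-symmetry alone. The cleanest route is to identify $\Delta$ with the Laplace--Beltrami operator of the bi-invariant metric and exploit the fact that left translations on $\mathcal{M}$ are isometries, so that their infinitesimal generators $X_x$ automatically commute with $\Delta$; bi-invariance of the Moufang loop does precisely the work that the Jacobi identity does in the Lie setting.
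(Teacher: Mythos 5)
Your proposal follows the same overall route as the paper: parts (1)--(2) are delegated to standard elliptic theory on compact manifolds, and part (3) rests on the commutation $[X_x,\Delta]=0$ together with finite-dimensionality of the eigenspaces. Where you add genuine value is in the treatment of that commutation. The paper (both here and in Proposition~\ref{prop:malcev-laplacian}) simply asserts that ``left-invariance of $\Delta$ and bi-invariance of the metric imply $[X_x,\Delta]=0$,'' whereas you correctly observe that the usual Casimir computation breaks down in the Malcev setting: expanding $[X_x,\sum_i X_{e_i}^2]$ produces the defect terms $\sum_i\bigl(T(x,e_i)X_{e_i}+X_{e_i}T(x,e_i)\bigr)$, whose cancellation does not follow from skew-symmetry of $\operatorname{ad}(x)$ alone. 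Your proposed fix --- identify $\Delta$ with the Laplace--Beltrami operator of the bi-invariant metric and note that the flows $\Phi_t(p)=\exp(tx)\circ p$ are isometries, so the $X_x$ are Killing fields and automatically commute with $\Delta_{\mathrm{LB}}$ --- is the right idea, and is in fact more of a proof than the paper supplies. To make it airtight you would still need to check two small points: (i) that $-\sum_i X_{e_i}^2$ really equals $\Delta_{\mathrm{LB}}$, which requires $\sum_i\nabla_{X_{e_i}}X_{e_i}=0$ (true for a bi-invariant metric, where one-parameter subloops are geodesics, but worth stating); and (ii) that $\Phi_{t+s}=\Phi_t\circ\Phi_s$, which uses di-associativity of the Moufang loop (the subloop generated by $\exp(x)$ and $p$ is a group). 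With those two remarks your argument is complete and, on the key step, more rigorous than the paper's.
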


\begin{proof}
(1)--(2) follow from standard elliptic theory on compact manifolds \cite{Taylor1996}. For (3), left-invariance of $\Delta$ and bi-invariance of the metric imply $[X_x, \Delta] = 0$ for all $x \in \mathfrak{M}$. Thus each eigenspace $E_\lambda = \ker(\Delta - \lambda I)$ is invariant under the action $x \mapsto X_x|_{E_\lambda}$. Since $\dim E_\lambda < \infty$, the orbit of any $\psi_\lambda \in E_\lambda$ under the integrated action $\{e^{t\,\mathrm{ad}(x)}\}$ is relatively compact, i.e.\ almost periodic.
\end{proof}

In the case $\mathcal{M} = S^7$, the Malcev Laplacian coincides (up to normalization) with the round Laplace--Beltrami operator. Its eigenvalues are $\lambda_k = k(k+6)$ for $k \in \mathbb{N}$, with multiplicities $\dim \mathcal{H}_k = \binom{k+6}{6} - \binom{k+4}{6}$. The action of $\mathfrak{M}(S^7) \cong \mathrm{Im}(\mathbb{O})$ on $\mathcal{H}_k$ is irreducible under $\mathrm{Spin}(7)$, and the associated flows are quasi-periodic with frequencies determined by the spectrum of $\mathrm{ad}(x)$.

The spectral decomposition of the Malcev Laplacian also provides a natural arena for almost periodic representations in the sense of Section~\ref{sec:structural-actions}. Consider the regular representation
\[
\pi : \mathfrak{M} \longrightarrow \mathcal{B}(L^2(\mathcal{M})), \quad \pi(x) := X_x,
\]
where $X_x$ denotes the closure of the left-invariant vector field acting on $L^2(\mathcal{M})$. Although $\pi(x)$ is unbounded on $L^2(\mathcal{M})$, it restricts to a bounded operator on each finite-dimensional eigenspace $E_\lambda = \ker(\Delta - \lambda I)$. By Proposition~\ref{prop:laplacian}(3), each $E_\lambda$ is invariant under $\pi$, and the orbit
\[
\{ \pi(e^{t\,\mathrm{ad}(x)}(y))|_{E_\lambda} : t \in \mathbb{R} \}
\]
is relatively compact (indeed, finite-dimensional and unitary). Hence, the restricted representation $\pi|_{E_\lambda}$ is almost periodic for every $\lambda$, and the full representation $\pi$ is almost periodic in the sense of Definition~\ref{def:structural-action} when interpreted via the direct-sum topology on smooth vectors. This illustrates how the intrinsic quasi-periodic geometry of $\mathcal{M}$ manifests concretely as an almost periodic representation on a Hilbert space of physical interest.

\subsection{Octonionic dynamical systems}\label{Mar2}

Let $\mathbb{O}$ denote the algebra of real octonions, and let $\mathfrak{M}(\mathbb{O}) = \mathrm{Im}(\mathbb{O})$ be the $7$-dimensional real Malcev algebra of purely imaginary octonions equipped with the commutator bracket
\[
[x, y] = \tfrac{1}{2}(xy - yx).
\]
Its associated simply connected analytic Moufang loop is the unit sphere $S^7 \subset \mathbb{O}$.

Equip $\mathbb{O}$ with its standard Euclidean inner product $\langle x, y \rangle = \operatorname{Re}(x\bar{y})$. Then $\mathfrak{M}(\mathbb{O})$ inherits a positive-definite inner product for which $\mathrm{ad}(x)$ is skew-symmetric for all $x$, and hence diagonalizable over $\mathbb{C}$ with purely imaginary eigenvalues.

\begin{proposition}\label{prop:ad-spectrum-corrected}
For any non-zero $x \in \mathfrak{M}(\mathbb{O})$, the operator $\mathrm{ad}(x) : \mathfrak{M}(\mathbb{O}) \to \mathfrak{M}(\mathbb{O})$ has eigenvalues:
\[
0 \text{ (multiplicity } 1), \quad +i\|x\| \text{ (multiplicity } 3), \quad -i\|x\| \text{ (multiplicity } 3).
\]
In particular, $\mathrm{ad}(x)^3 = -\|x\|^2 \mathrm{ad}(x)$.
\end{proposition}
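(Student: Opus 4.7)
The plan is to reduce the proposition to a single algebraic identity $\mathrm{ad}(x)^3 = -\|x\|^2\,\mathrm{ad}(x)$, from which the spectrum reads off immediately; the multiplicities will then follow from the Euclidean skew-symmetry of $\mathrm{ad}(x)$ together with an easy kernel computation.

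The key input is the decomposition $xy = -\langle x,y\rangle + x\times y$ valid for $x,y\in\mathrm{Im}(\mathbb{O})$, where the imaginary part coincides with the bracket: $[x,y] = \tfrac12(xy-yx) = x\times y$. First I would compute
\[
\mathrm{ad}(x)^2(y) \;=\; \tfrac14\bigl(x(xy) - x(yx) - (xy)x + (yx)x\bigr).
\]
The alternative law $x(xy) = x^2 y$ and symmetrically $(yx)x = yx^2$, combined with $x^2 = -\|x\|^2$, handles the outer two terms. For the inner two I would exploit flexibility $x(yx) = (xy)x$ together with the scalar identity $yx = -2\langle x,y\rangle - xy$ to rewrite $x(yx)$ as $-2\langle x,y\rangle\,x + \|x\|^2 y$. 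Collecting terms yields the 7-dimensional triple-product identity
\[
\mathrm{ad}(x)^2(y) \;=\; \langle x,y\rangle\,x \;-\; \|x\|^2\,y.
\]
Applying $\mathrm{ad}(x)$ once more and using $x\times x = 0$ produces $\mathrm{ad}(x)^3(y) = -\|x\|^2\,\mathrm{ad}(x)(y)$, as required.

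From the cubic relation, the minimal polynomial of $\mathrm{ad}(x)$ divides $\lambda(\lambda^2 + \|x\|^2)$, whose roots $\{0, \pm i\|x\|\}$ are simple; hence $\mathrm{ad}(x)$ is diagonalizable over $\mathbb{C}$ with spectrum contained in this set. The skew-symmetry $\langle x\times y, z\rangle = -\langle x\times z, y\rangle$, inherited from the total antisymmetry of the associative $3$-form $\varphi(x,y,z) = \langle x\times y, z\rangle$, shows that $\mathrm{ad}(x)$ is a real skew-symmetric operator on $\mathfrak{M}(\mathbb{O})$, so the complex eigenvalues $\pm i\|x\|$ occur with equal multiplicity. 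For the kernel, if $x\times y = 0$ then the identity of the previous step collapses to $\|x\|^2 y = \langle x,y\rangle\,x$, forcing $y\in\mathbb{R}x$; hence $\dim\ker\mathrm{ad}(x) = 1$, and the remaining six dimensions split evenly as $3+3$.

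The only delicate point is the bookkeeping in the computation of $\mathrm{ad}(x)^2$: one must deploy alternativity and flexibility without assuming associativity, which requires careful parenthesization at each step and a vigilant distinction between products like $x(yx)$ and $(xy)x$ whenever three factors occur. A conceptual shortcut is available via the transitivity of $G_2 = \mathrm{Aut}(\mathbb{O})$ on the unit sphere of $\mathrm{Im}(\mathbb{O})$, which reduces the claim to $x = e_1$ and reads off the spectrum directly from the Fano plane: $\mathrm{ad}(e_1)$ splits $\mathrm{Im}(\mathbb{O})$ as $\mathbb{R}e_1 \oplus \mathrm{span}(e_2,e_3) \oplus \mathrm{span}(e_4,e_5) \oplus \mathrm{span}(e_6,e_7)$ with three orthogonal quarter-turn blocks. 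I would nevertheless prefer the direct algebraic route, since it is self-contained and yields the cubic identity $\mathrm{ad}(x)^3 = -\|x\|^2\,\mathrm{ad}(x)$ as an explicit consequence of the alternative axioms, which is exactly the form needed by the functional calculus of Remark~\ref{rem:functional-calculus}.
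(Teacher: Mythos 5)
Your proof is correct, and it takes a genuinely different route from the paper. The paper's proof is precisely the ``conceptual shortcut'' you mention at the end and decline to use: it normalizes $e_1 = x/\|x\|$, asserts (implicitly via the transitivity of $G_2$ on unit imaginary octonions and the Fano-plane multiplication table) that $e_1^\perp$ splits into three mutually orthogonal $\mathrm{ad}(e_1)$-invariant $2$-planes on each of which $\mathrm{ad}(e_1)$ is a quarter-turn, and reads off the eigenvalues $\pm i$ with multiplicity $3$; the cubic identity is then a byproduct of diagonalizability. Your route instead derives the triple-product identity $\mathrm{ad}(x)^2(y) = \langle x,y\rangle x - \|x\|^2 y$ directly from alternativity, flexibility, and the polarization $xy + yx = -2\langle x,y\rangle$ (I checked the bookkeeping: the four terms combine to $\tfrac14(4\langle x,y\rangle x - 4\|x\|^2 y)$ as claimed), obtains $\mathrm{ad}(x)^3 = -\|x\|^2\,\mathrm{ad}(x)$ as an immediate corollary, and then recovers the multiplicities from the simplicity of the roots of $\lambda(\lambda^2+\|x\|^2)$, the kernel computation, and the reality/skew-symmetry of $\mathrm{ad}(x)$. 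What your approach buys is self-containedness: the paper's invariant-plane decomposition is stated without justification and silently invokes the homogeneity of the unit sphere in $\mathrm{Im}(\mathbb{O})$, whereas your argument needs only the alternative axioms and works for arbitrary $x$ without normalization; it also produces the explicit formula for $\mathrm{ad}(x)^2$, which is stronger than the cubic identity alone. What the paper's approach buys is geometric transparency --- the three quarter-turn blocks make the $1+3+3$ multiplicity pattern visible at a glance --- at the cost of leaning on unproved structural facts about the octonion multiplication table.
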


\begin{proof}
Choose a unit vector $e_1 = x / \|x\|$. The orthogonal complement of $\mathbb{R} e_1$ in $\mathrm{Im}(\mathbb{O})$ decomposes into three mutually orthogonal $2$-planes $V_1, V_2, V_3$, each invariant under $\mathrm{ad}(e_1)$ and on which $\mathrm{ad}(e_1)$ acts as a $\pi/2$-rotation. Hence $\mathrm{ad}(e_1)$ has eigenvalues $\pm i$ on each $V_k$, yielding total multiplicity $3$ for each. Scaling by $\|x\|$ gives the result.
\end{proof}

Let $X_x$ be the left-invariant vector field on $S^7$ generated by $x \in \mathfrak{M}(\mathbb{O})$, and let $\Phi_t(p) = \exp(tx) \circ p$ be its flow.

\begin{proposition}\label{prop:torus-dim-corrected}
Let $x \in \mathfrak{M}(\mathbb{O}) = \operatorname{Im}(\mathbb{O})$ be a non-zero element, and let $\Phi_t$ denote the flow on $S^7$ generated by the left-invariant vector field $X_x$. Then:
\begin{enumerate}
    \item[(i)] The operator $\operatorname{ad}(x)$ has eigenvalues $0$ (multiplicity 1), $+i\|x\|$ (multiplicity 3), and $-i\|x\|$ (multiplicity 3).
    \item[(ii)] The one-parameter group $t \mapsto e^{t\,\operatorname{ad}(x)}$ is periodic with minimal period $T = 2\pi / \|x\|$.
    \item[(iii)] For every $p \in S^7$, the orbit $\{\Phi_t(p) : t \in \mathbb{R}\}$ is periodic, and its closure is diffeomorphic to the circle $T^1$.
\end{enumerate}
In particular, the dynamics on $S^7$ induced by any element of $\mathfrak{M}(\mathbb{O})$ are strictly periodic; no quasi-periodic orbits with higher-dimensional torus closures occur in this setting.
\end{proposition}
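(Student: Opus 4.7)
The plan is to exploit the explicit octonionic structure, where the loop exponential $\exp(tx)$ and its adjoint action can both be written down in closed form.

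Part (i) is already provided by Proposition~\ref{prop:ad-spectrum-corrected}; I would simply cite it, noting that the underlying decomposition $\operatorname{Im}(\mathbb{O}) = \mathbb{R}x \oplus V_1 \oplus V_2 \oplus V_3$ into $\operatorname{ad}(x)$-invariant subspaces (a $1$-dimensional kernel and three $2$-planes on which $\operatorname{ad}(x)$ is a quarter-turn rotation scaled by $\|x\|$) is the key input for the remaining parts. Part (ii) then follows by block-wise exponentiation: on $\mathbb{R}x$, the operator $e^{t\,\operatorname{ad}(x)}$ is trivial; on each $V_j$, it is rotation by angle $t\|x\|$. The operator equals the identity precisely when all three rotations return to zero, i.e.\ when $t\|x\| \in 2\pi\mathbb{Z}$, yielding the minimal period $T = 2\pi/\|x\|$.

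For part (iii), I would first identify $\Phi_t$ with left multiplication by the octonionic exponential $\exp(tx)$. Using $x^2 = -\|x\|^2$, valid for $x \in \operatorname{Im}(\mathbb{O})$, together with power-associativity of $\mathbb{O}$, the power series for $\exp(tx)$ evaluates in closed form to
\[
\exp(tx) = \cos(t\|x\|) + \sin(t\|x\|)\,\frac{x}{\|x\|},
\]
a great-circle parametrization of $S^7$ with period $2\pi/\|x\|$. Hence every orbit $\Phi_t(p) = \exp(tx) \circ p$ is periodic with period dividing $2\pi/\|x\|$. Minimality follows from the fact that left multiplication in a Moufang loop is a bijection: if $\Phi_T(p) = p$, then $\exp(Tx) = 1$, forcing $T\|x\| \in 2\pi\mathbb{Z}$. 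Finally, the resulting injective periodic map descends to a smooth immersion $\mathbb{R}/(2\pi/\|x\|)\mathbb{Z} \hookrightarrow S^7$; compactness of the source and Hausdorffness of the target promote it to an embedding, so the orbit closure is diffeomorphic to $T^1$.

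The main technical hurdle is reconciling the abstract loop exponential used in Proposition~\ref{prop:flow-ap} with the explicit octonionic power series. One must verify that the curve $\gamma(t) = \cos(t\|x\|) + \sin(t\|x\|)\,x/\|x\|$ is the integral curve of the left-invariant vector field $X_x$ through the identity, which reduces to checking $\gamma'(0) = x$ and the one-parameter subgroup property $\gamma(s+t) = \gamma(s) \circ \gamma(t)$. The latter relies crucially on power-associativity of $\mathbb{O}$, which ensures that $\{\gamma(t) : t \in \mathbb{R}\}$ generates an associative subalgebra isomorphic to $\mathbb{C}$; once this is established, uniqueness of integral curves completes the identification.
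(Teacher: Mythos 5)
Your proposal is correct. Parts (i) and (ii) coincide with the paper's argument: cite Proposition~\ref{prop:ad-spectrum-corrected} and observe that $e^{t\,\operatorname{ad}(x)}$ acts trivially on $\mathbb{R}x$ and as rotation by $t\|x\|$ on each invariant $2$-plane, so it equals $I$ exactly when $t\|x\|\in 2\pi\mathbb{Z}$. For part (iii), however, you take a genuinely different and in fact more self-contained route. The paper invokes the conjugation-type identity $\Phi_t(\exp(y))=\exp\bigl(e^{t\,\operatorname{ad}(x)}(y)\bigr)$ together with surjectivity of $\exp$ and left-invariance, and concludes periodicity from periodicity of $e^{t\,\operatorname{ad}(x)}$; it does not compute the orbit period or spell out why the orbit is an embedded circle. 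You instead identify $\Phi_t(p)$ directly with $\exp(tx)\circ p$, evaluate the octonionic exponential in closed form as the great-circle curve $\cos(t\|x\|)+\sin(t\|x\|)\,x/\|x\|$ using $x^2=-\|x\|^2$ and power-associativity, and then get minimality of the period by cancellation in the loop and the $T^1$ diffeomorphism by the standard compact-immersion-into-Hausdorff argument. What your approach buys is an explicit minimal period for every orbit and a cleaner justification of the embedding claim; it also avoids relying on the adjoint-flow formula, whose reconciliation with the left-translation definition of $X_x$ the paper leaves implicit. Your identification of the loop exponential with the power series via the one-parameter subgroup property (resting on Artin's theorem, so that $\exp(sx)$, $\exp(tx)$, and $p$ generate an associative subloop) is exactly the right technical point to flag. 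One small correction: to pass from $\exp(Tx)\circ p=p=1\circ p$ to $\exp(Tx)=1$ you cancel $p$ on the \emph{right}, i.e.\ you use injectivity of the right translation $q\mapsto q\circ p$ (or alternativity, multiplying by $p^{-1}$), not bijectivity of left multiplication; this is a terminological slip, not a gap.
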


\begin{proof}
(i) follows from Proposition~\ref{prop:ad-spectrum-corrected} and the explicit decomposition of $\operatorname{Im}(\mathbb{O})$ into three orthogonal $\operatorname{ad}(x)$-invariant 2-planes. Since all non-zero eigenvalues are integer multiples of the single frequency $\|x\|$, the exponential $e^{t\,\operatorname{ad}(x)}$ satisfies $e^{t\,\operatorname{ad}(x)} = I$ if and only if $t \in (2\pi / \|x\|) \mathbb{Z}$, establishing (ii).

For (iii), recall that $\Phi_t(\exp(y)) = \exp(e^{t\,\operatorname{ad}(x)}(y))$ for $y$ near $0$, and that the exponential map $\exp \colon \operatorname{Im}(\mathbb{O}) \to S^7$ is surjective. Since $e^{t\,\operatorname{ad}(x)}$ is periodic, so is the curve $t \mapsto \exp(e^{t\,\operatorname{ad}(x)}(y))$, and by left-invariance, this periodicity extends to all orbits in $S^7$. Hence every orbit is a closed one-dimensional submanifold, diffeomorphic to $T^1$.
\end{proof}

Hence the dynamics on $S^7$ induced by any $x \in \mathfrak{M}(\mathbb{O})$ are **strictly periodic**, not merely quasi-periodic. This provides a canonical example of a finite-dimensional, recurrent, non-associative dynamical system with explicit spectral and geometric data.

\subsection{Spectral Invariants and Technical Clarifications}\label{Mar3}

In the geometric setting developed in Section~\ref{sec:dynamics} and Subsection~\ref{Mar2}, the recurrent structure of an almost periodic Malcev algebra $\mathfrak{M} = T_e\mathcal{M}$ manifests analytically through spectral invariants of natural differential operators on the associated compact Moufang loop $\mathcal{M}$. This provides a mathematically rigorous counterpart to the more speculative uses of non-associative symmetry in physics.

Let $\mathcal{M}$ be a compact, connected, simply connected analytic Moufang loop (e.g., $\mathcal{M} = S^7$), and let $\mathfrak{M} = T_e \mathcal{M}$ be its tangent Malcev algebra. By Corollary~\ref{cor:compact-loop-ap}, $\mathfrak{M}$ is almost periodic. Assume $D$ is a self-adjoint, elliptic differential operator on $\mathcal{M}$ that commutes with the infinitesimal action of $\mathfrak{M}$; equivalently,
\[
[X_x, D] = 0 \quad \text{for all } x \in \mathfrak{M},
\]
where $X_x$ denotes the left-invariant vector field generated by $x$. The Malcev Laplacian $\Delta$ introduced in Section~\ref{Mar1} is a canonical example.

Under these hypotheses, each eigenspace $E_\lambda = \ker(D - \lambda I)$ is finite-dimensional and invariant under the integrated action $t \mapsto e^{t\,\mathrm{ad}(x)}$. Since $\mathfrak{M}$ is almost periodic, the orbit $\{e^{t\,\mathrm{ad}(x)}(y) : t \in \mathbb{R}\}$ is relatively compact for all $x, y \in \mathfrak{M}$, and hence the induced representation on $E_\lambda$ has relatively compact image in $\mathrm{GL}(E_\lambda)$. Consequently, the closure of the subgroup
\[
G := \overline{\langle e^{t\,\operatorname{ad}(x)} \mid x \in \mathfrak{M},\ t \in \mathbb{R} \rangle} \subset \operatorname{Aut}(\mathfrak{M})
\]
is a compact Lie group.

For any Schwartz-class function $f \colon \mathbb{R} \to \mathbb{R}$, the operator $f(D)$ is trace-class, and the spectral invariant
\[
S_f(D) := \operatorname{Tr}\bigl(f(D)\bigr)
\]
is well-defined and $G$-invariant. Thus, spectral invariants encode the recurrent symmetry of the underlying Malcev geometry. This observation is fully rigorous within the established framework of elliptic operators on compact manifolds; it does not rely on noncommutative geometry or speculative physical models.

\begin{proposition}\label{prop:spectral_invariants}
Let $(\mathcal{M}, \circ)$ be a compact, connected, simply connected analytic Moufang loop, and let $\mathfrak{M} = T_e\mathcal{M}$ be its tangent Malcev algebra, which is almost periodic by Corollary~\ref{cor:compact-loop-ap}. Let $D$ be a self-adjoint, elliptic differential operator on $\mathcal{M}$ such that $[X_x, D] = 0$ for all $x \in \mathfrak{M}$. Then:
\begin{enumerate}
    \item[(i)] The spectrum of $D$ is discrete, consists of real eigenvalues of finite multiplicity, and accumulates only at $+\infty$.
    \item[(ii)] Each eigenspace $E_\lambda = \ker(D - \lambda I)$ is finite-dimensional and invariant under the compact group $G$ defined above.
    \item[(iii)] For any Schwartz-class function $f \colon \mathbb{R} \to \mathbb{R}$, the spectral invariant $S_f(D) = \operatorname{Tr}(f(D))$ is well-defined and $G$-invariant.
\end{enumerate}
\end{proposition}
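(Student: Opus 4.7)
The plan is to reduce (i) and (ii) to standard elliptic theory on compact manifolds, then derive (iii) by combining Weyl-type eigenvalue asymptotics with the invariance from (ii); the only genuinely non-associative input appears in lifting the compact group $G \subset \operatorname{Aut}(\mathfrak{M})$ to an action on $L^2(\mathcal{M})$. For (i), I would invoke the standard fact that a self-adjoint elliptic differential operator on a smooth compact manifold has compact resolvent, whence by the spectral theorem for compact self-adjoint operators its spectrum is discrete, real, of finite multiplicity, and accumulates only at $+\infty$ (see \cite[Ch.~5]{Taylor1996}). No further structure beyond the hypotheses of the proposition is required.

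For (ii), finite-dimensionality of each $E_\lambda$ is immediate from (i), and for $G$-invariance I would proceed in three substeps. First, each smooth left-invariant vector field $X_x$ preserves the bi-invariant volume on $\mathcal{M}$ (via \cite{Nagy2007}) and hence generates a strongly continuous unitary group $U_t^{(x)}$ on $L^2(\mathcal{M})$. Second, the commutation $[X_x, D] = 0$ promotes to $U_t^{(x)} D = D U_t^{(x)}$ on $\operatorname{dom}(D)$, so each $E_\lambda$ is $U_t^{(x)}$-invariant. Third, by the Sabinin--Mikheev globalization, every $\alpha \in G$ lifts uniquely to an analytic loop automorphism $\tilde\alpha$ of $\mathcal{M}$ through the identity $\tilde\alpha \circ \exp = \exp \circ \alpha$; the resulting compact subgroup $\tilde G$ of unitaries on $L^2(\mathcal{M})$ gives the lifted action, under which each $E_\lambda$ is invariant by construction.

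For (iii), Weyl's asymptotic $\lambda_k \sim C_n k^{2/n}$ on an $n$-dimensional compact manifold ensures that eigenvalue multiplicities grow only polynomially, so for any Schwartz $f$ the series $\sum_k f(\lambda_k) \dim E_{\lambda_k}$ converges absolutely; hence $f(D) = \sum_\lambda f(\lambda) P_\lambda$ is trace class with $\operatorname{Tr}(f(D))$ equal to that sum. The $G$-invariance of $S_f(D)$ then follows because each spectral projection $P_\lambda$ commutes with the lifted action from (ii), so $f(D)$ does too, and the trace of a $\tilde G$-invariant trace-class operator depends only on its spectral data. The main obstacle, and the only step requiring genuinely non-associative input, is the third substep of (ii): producing a globally well-defined lift of $G$ from $\operatorname{Aut}(\mathfrak{M})$ to $\operatorname{Aut}(\mathcal{M})$. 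In the Lie case this is automatic via the universal property of the simply connected cover; in the Moufang setting one must appeal to the Sabinin--Mikheev globalization and verify that the local intertwining extends to a single-valued analytic loop automorphism. Once this lift is in hand, the rest is textbook elliptic theory.
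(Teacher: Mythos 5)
Your proof follows essentially the same route as the paper's: (i) is standard elliptic theory on a compact manifold, (ii) comes from the commutation $[X_x,D]=0$ preserving each $E_\lambda$ together with compactness of $G$, and (iii) from the rapid decay of $f$ against polynomially growing eigenvalues, with invariance of the trace. The one place you go beyond the paper is in making explicit how $G \subset \operatorname{Aut}(\mathfrak{M})$ actually acts on $L^2(\mathcal{M})$ --- via the Sabinin--Mikheev lift $\tilde\alpha \circ \exp = \exp \circ\, \alpha$ and the induced unitaries --- a step the paper's proof leaves entirely implicit; your identification of this as the only genuinely non-associative obstacle, and your resolution of it, are sound.
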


\begin{proof}
Item (i) follows from standard elliptic theory on compact manifolds \cite[Chapter 5]{Taylor1996}.  
Since $[X_x, D] = 0$, the flow of $X_x$ preserves each eigenspace $E_\lambda$. By Proposition~\ref{prop:spectral-ap}, $G$ is compact, so its restriction to $E_\lambda$ yields a finite-dimensional continuous representation, proving (ii).  
Finally, because $f$ decays rapidly and the eigenvalues of $D$ grow polynomially, $f(D)$ is trace-class. The trace is the sum $\sum_\lambda f(\lambda) \dim E_\lambda$, which is invariant under the unitary action of $G$, establishing (iii).
\end{proof}

We now clarify three technical points that ensure the mathematical precision of the above framework.

\smallskip
\noindent
\textbf{(a) Completeness of flows.}  
Proposition~\ref{prop:flow-ap} assumes that the left-invariant vector fields $X_x$ generate complete flows $\Phi_t$ on $\mathcal{M}$. In finite dimensions, this holds automatically when $\mathcal{M}$ is compact—as in Example~\ref{ex:s7-dynamics}—since all smooth vector fields on compact manifolds are complete. For infinite-dimensional Banach–Malcev algebras, completeness would require additional analytic structure (e.g., a compatible Riemannian metric), which lies beyond the scope of this paper. Thus, all geometric conclusions are stated under the standing assumption that $\mathcal{M}$ is finite-dimensional and compact.

\smallskip
\noindent
\textbf{(b) Strict periodicity on $S^7$.}  
For $\mathcal{M} = S^7$, the dynamics induced by any $x \in \mathfrak{M}(\mathbb{O}) = \operatorname{Im}(\mathbb{O})$ are strictly periodic, not merely quasi-periodic. Indeed, the spectrum of $\operatorname{ad}(x)$ is $\{0, \pm i\|x\|\}$ with multiplicities $1,3,3$, so all non-zero frequencies are rationally dependent. Consequently, every orbit closure is diffeomorphic to $T^1$, and no higher-dimensional quasi-periodic tori arise in this setting.

\smallskip
\noindent
\textbf{(c) Bi-invariant Haar measure.}  
The existence of a normalized bi-invariant measure on $\mathcal{M}$ is essential for defining $L^2(\mathcal{M})$ and the regular action. On $S^7$, such a measure exists by symmetry (see \cite{Baez2002}). More generally, Nagy \cite{Nagy2007} proved that every compact analytic Moufang loop admits a unique bi-invariant probability measure, resolving the Hilbert fifth problem in this context. This result justifies our functional-analytic constructions for all compact Moufang loops.

\medskip

\noindent
\textbf{Remark.} While non-associative structures appear in certain string-theoretic models with $R$-flux, no consistent Yang-Mills-type gauge theory based on Malcev algebras is currently available. The failure of the Jacobi identity obstructs the definition of gauge-covariant curvature and standard Bianchi identities. Hence, any physical interpretation of the present spectral framework remains speculative and lies beyond the present scope.
\section{Speculative Directions and Outlook}\label{sec:speculative}

While non-associative structures appear in certain string-theoretic models with $R$-flux, no consistent Yang--Mills-type gauge theory based on Malcev algebras is currently available. The failure of the Jacobi identity obstructs the definition of gauge-covariant curvature and standard Bianchi identities. Hence, any physical interpretation of the present spectral framework remains speculative and lies beyond the present scope.

We emphasize that the framework developed in this paper is purely mathematical and grounded in the geometry of compact Moufang loops—most concretely, the 7-sphere $S^7$. The notion of structural action (Section~\ref{sec:structural-actions}) is not a representation theory but a *post hoc* formalization of a single geometric example. Consequently, extensions to physical models should be regarded as heuristic at best, pending major algebraic advances such as a Malcev analogue of Schur’s lemma, a cohomological interpretation of the defect $T$, or a functional-analytic realization of the Pérez-Izquierdo–Shestakov enveloping algebra.

Until such structures are developed—and shown to support a consistent curvature theory—the use of Malcev algebras in gauge-theoretic contexts remains conjectural. This paper makes no claim in that direction; its aim is to establish a rigorous foundation for almost periodicity in the non-associative setting, within the limits of current algebraic knowledge.
\appendix
\section{Geometric verification of the defect identity on \texorpdfstring{$S^7$}{S 7}}\label{app:defect}

Let $\{e_1,\dots,e_7\}$ be the standard orthonormal basis of the imaginary octonions $\operatorname{Im}(\mathbb{O})$, with multiplication governed by the Fano plane. In particular,
\[
[e_1, e_2] = e_4, \quad [e_2, e_4] = e_1, \quad [e_4, e_1] = e_2,
\]
and all other brackets among $\{e_1,e_2,e_4\}$ follow from anti-commutativity.

Let $E_\lambda \subset L^2(S^7)$ be a finite-dimensional eigenspace of the Malcev Laplacian (cf.~Proposition~\ref{prop:malcev-laplacian}), and define the structural action $\pi \colon \mathfrak{M} \to \mathcal{B}(E_\lambda)$ by $\pi(x) = X_x|_{E_\lambda}$ as in Example~\ref{ex:regular-action}, where $\mathfrak{M} = \operatorname{Im}(\mathbb{O}) = T_1 S^7$. Set
\[
T(x,y) := [\pi(x), \pi(y)] - \pi([x,y]) = [X_x, X_y] - X_{[x,y]} \big|_{E_\lambda}.
\]

For the basis elements $e_1, e_2 \in \mathfrak{M}$ with $[e_1,e_2] = e_4$, a direct computation using the coordinate expression of left-invariant vector fields on $S^7$ (see \cite[§4]{Baez2002}) yields:
\[
T(e_1,e_2) = \frac{\partial^2}{\partial \theta_1 \partial \theta_2} + \text{(lower order terms)},
\]
where $\theta_1, \theta_2$ are local angular coordinates adapted to the $e_1$–$e_2$ plane in the tangent space at the identity. The principal symbol of $T(e_1,e_2)$ is the symmetric bilinear form
\[
\sigma_{\mathrm{pr}}(T(e_1,e_2))(\xi,\eta) = \xi_1 \eta_2 + \xi_2 \eta_1,
\]
which coincides with the principal symbol of the algebraic defect operator $S(e_1,e_2)$ acting via the canonical embedding of $\mathfrak{M}$ into the Lie algebra $\mathfrak{spin}(7)$. This confirms that the analytic correction $T$ genuinely reflects the intrinsic Malcev defect $S$ at the level of differential operators—this is precisely the analytical manifestation of condition (R2) discussed in Section~\ref{sec:structural-actions}.

For the triple $(x,y,z) = (e_1, e_2, e_4)$, a direct computation shows that both sides of the formal identity
\[
T([x,y],z) + T([y,z],x) + T([z,x],y)
= [\pi(x), T(y,z)] + [\pi(y), T(z,x)] + [\pi(z), T(x,y)]
\]
vanish identically on $E_\lambda$. This follows from:
\begin{itemize}
    \item the Jacobi identity for smooth vector fields on $S^7$ (which holds because the Lie bracket of vector fields always satisfies the Jacobi identity, regardless of the underlying loop structure);
    \item the specific algebraic relations among $\{e_1,e_2,e_4\}$ induced by the octonionic product, which ensure that the nested brackets $[[e_i,e_j],e_k]$ cancel precisely against the commutators involving $T$.
\end{itemize}
This verification is geometric in nature: it relies on the differential-geometric structure of $S^7$ as the homogeneous space $\mathrm{Spin}(7)/G_2$, not on an abstract Malcev representation theory. The transitivity of the $\mathrm{Spin}(7)$-action on orthonormal frames ensures that the same computation holds for all orthonormal triples satisfying $[e_i,e_j] = e_k$, covering all 35 non-trivial basis combinations.

We emphasize that this does not constitute a general algebraic proof of a Malcev cocycle condition. Rather, it illustrates how the \textbf{intrinsic defect} $S(x,y)$ is geometrically realized in the canonical octonionic example. Whether a purely algebraic framework for such identities can be developed for arbitrary Malcev algebras remains an open question.

\end{document}